\newtheorem{theorem}{Theorem}[section]
\newtheorem{lemma}[theorem]{Lemma}
\newtheorem{cor}[theorem]{Corollary}
\newtheorem{prop}[theorem]{Proposition}
\theoremstyle{definition}
\newtheorem{definition}[theorem]{Definition}
\newtheorem{example}[theorem]{Example}
\theoremstyle{remark}
\numberwithin{equation}{section}
\def\bD{\mathbb{D}}
\def\bM{\mathbb{M}}
\def\bR{\mathbb{R}}
\def\diag{\mathrm{diag}}
\begin{document}
\baselineskip=15pt

\title{Norm and anti-norm inequalities for positive semi-definite matrices}

\author{Jean-Christophe Bourin}
\address{Laboratoire de math\'ematiques, Universit\'e de Franche Comt\'e,
25030 Besan\c con, France}
\email{}

\author{Fumio Hiai}
\address{Graduate School of Information Sciences, Tohoku University,
Aoba-ku, Sendai 980-8579, Japan}

\subjclass[2000]{Primary 15A60, 47A30, 47A60}

\keywords{Matrix, operator, trace, symmetric norm, symmetric anti-norm, convex function,
concave function, subadditivity, superadditivity, majorization.}

\thanks{}

\begin{abstract}
Some subadditivity results involving symmetric (unitarily invariant) norms are obtained.
For instance, if $g(t)=\sum_{k=0}^m a_kt^k$ is a polynomial of degree $m$ with non-negative
coefficients, then, for all positive operators $A,\,B$ and all symmetric norms,
\begin{equation*}
\| g(A+B) \|^{1/m} \le \|g(A) \|^{1/m} + \| g(B) \|^{1/m}.
\end{equation*} 
To give parallel superadditivity results, we investigate {\it anti-norms}, a class of
functionals containing the Schatten $q$-norms for $q\in(0,1]$ and $q<0$. The results are
extensions of the Minkowski determinantal inequality. A few estimates for block-matrices
are derived. For instance, let $f:[0,\infty)\to [0,\infty)$ be concave and
$p\in(1,\infty)$. If $f^p(t)$ is superadditive, then
${\mathrm{Tr\,}} f(A) \ge \bigl(\sum_{i=1}^m f^p(a_{ii})\bigr)^{1/p}$
for all positive $m\times m$ matrix $A=[a_{ij}]$. Furthermore, for
the normalized trace $\tau$, we consider functions $\varphi(t)$ and $f(t)$ for
which the functional $A\mapsto\varphi\circ\tau\circ f(A)$ is convex or concave, and
obtain a simple analytic criterion.
\end{abstract}

\maketitle

\section{Introduction}

Let $g(t)$ be a convex function on the positive half-line vanishing at $0$. There exist
several matrix versions of the  obvious superadditivity property $g(a+b)\ge g(a)+g(b)$.
It is less usual to mix convexity assumptions and subadditivity results. This paper points
out some matrix subadditivity inequalities involving convex functions such as the first
inequality stated in the abstract. In a parallel way, we also consider superadditivity
inequalities with concave functions, implying some estimate such as the second inequality
of the abstract. These lead us to study  a wide class of functionals on the cone of
positive definite matrices, that we call anti-norms.

The next section  surveys a short list of known subadditivity and superadditivity
inequalities. Section 3 introduces the class of anti-norms and Section 4 contains our new
results. Finally in Section 5, we discuss some simple convexity/concavity criteria for a
functional $A\mapsto\varphi\circ\tau\circ f(A)$, where the range of $f(t)$
is included in the domain of $\varphi(t)$ and $\tau$ is the normalized
trace ($\tau(I) =1$).

By operator, we mean a linear operator on a finite-dimensional Hilbert space. We use
interchangeably the terms operator and matrix. Especially, a positive operator means a
positive (semi-definite) matrix. Consistently $\bM_n$ denotes the set of operators on a
space of dimension $n$ and $\bM_n^+$ stands for its positive part. Though we confine to
$\bM_n$, some extensions to the infinte dimensional setting and operator algebras are
possible.

\section{Concave functions and unitary orbits}

Here we recall some recent subadditivity properties for concave functions, and similarly
superadditivity properties of convex functions. Section 4 will be devoted to our new
inequalities contrasting by involving convex functions and subadditivity properties, or
concave functions and superadditivity results. Given two Hermitian operators, the relation
$X\le Y$ refers to the usual positive semi-definite ordering. From \cite{AB} we know:

\vskip 10pt
\begin{theorem}\label{T-2.1}
Let $f:[0,\infty)\to [0,\infty)$ be concave and $A,\,B$ be positive operators. Then, for
some unitaries $U,\,V$,
$$
f(A+B)\le Uf(A)U^* + Vf(B)V^*.
$$
\end{theorem}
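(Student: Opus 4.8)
The plan is to reduce the operator (L\"owner order) inequality to a family of scalar eigenvalue inequalities, to prove those by a compression argument, and then to convert back. Throughout, write $\alpha_j,\beta_j,\gamma_j$ for the eigenvalues of $A,B,A+B$ in decreasing order.

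First I would record two reductions. Since $f$ is concave and nonnegative on all of $[0,\infty)$, it must be nondecreasing (a concave function that ever decreased would eventually become negative), so $f$ acts monotonically on spectra and $\lambda_j(f(A))=f(\alpha_j)$, etc. Next, writing $c=f(0)\ge 0$ and $f_0=f-c$, the function $f_0$ is again concave and nonnegative with $f_0(0)=0$; if the theorem holds for $f_0$ with some unitaries $U,V$, then adding $cI\le 2cI=U(cI)U^*+V(cI)V^*$ gives it for $f$ with the same $U,V$. So I may assume $f(0)=0$.

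The key structural fact I would invoke is the unitary-orbit characterization: for Hermitian $X$ and positive $Y_1,Y_2$ there exist unitaries with $X\le UY_1U^*+VY_2V^*$ if and only if $\lambda(X)\prec_w\lambda(Y_1)+\lambda(Y_2)$, that is, $\sum_{j=1}^k\lambda_j(X)\le\sum_{j=1}^k\lambda_j(Y_1)+\sum_{j=1}^k\lambda_j(Y_2)$ for every $k$ (the Fan-type lemma that a matrix is dominated by a sum of unitary conjugates exactly when the corresponding Ky Fan norms are subadditive). Applied with $X=f(A+B)$, $Y_1=f(A)$, $Y_2=f(B)$, the theorem becomes the scalar statement
\[
\sum_{j=1}^k f(\gamma_j)\le \sum_{j=1}^k f(\alpha_j)+\sum_{j=1}^k f(\beta_j),\qquad k=1,\dots,n .
\]
To prove this for fixed $k$, I would let $P$ be the spectral projection of $A+B$ onto its $k$ largest eigenvalues and let $V\colon\mathbb{C}^k\hookrightarrow\mathbb{C}^n$ be an isometry onto its range, so that $\mathrm{Tr}\,(Pf(A+B))=\sum_{j=1}^k f(\gamma_j)$. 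The compression $H\mapsto V^*HV$ is unital and positive, so the Hansen--Pedersen--Jensen trace inequality for the concave $f$ gives $\mathrm{Tr}\,(V^*f(A+B)V)\le \mathrm{Tr}\,f(V^*AV+V^*BV)$. Since $V^*AV$ and $V^*BV$ are positive operators on the $k$-dimensional space, Rotfel'd's trace subadditivity inequality yields $\mathrm{Tr}\,f(V^*AV+V^*BV)\le \mathrm{Tr}\,f(V^*AV)+\mathrm{Tr}\,f(V^*BV)$. Finally, Cauchy interlacing gives $\lambda_j(V^*AV)\le\alpha_j$, and since $f$ is nondecreasing, $\mathrm{Tr}\,f(V^*AV)=\sum_{j=1}^k f(\lambda_j(V^*AV))\le\sum_{j=1}^k f(\alpha_j)$, and likewise for $B$. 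Chaining these inequalities proves the scalar statement, hence the theorem.

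The main obstacle is really the conceptual passage between an operator inequality and the eigenvalue inequalities: the unitary-orbit lemma is what makes the reduction to Ky Fan norms legitimate, and one must be careful that weak majorization by the \emph{entrywise} sum $\lambda(Y_1)+\lambda(Y_2)$, and not by the spectrum of the direct sum $Y_1\oplus Y_2$, is the correct condition (the latter is genuinely stronger and already fails for $f(t)=\sqrt t$ with $A=B=\mathrm{diag}(1,0)$). The analytic engine is Rotfel'd's trace inequality; for a self-contained argument it can itself be obtained from the integral representation $f(t)=at+\int_0^\infty\min(t,s)\,d\mu(s)$, reducing to the linear case and to $f(t)=\min(t,s)$.
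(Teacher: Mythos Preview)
The paper does not actually prove Theorem~2.1; it is quoted from \cite{AB}. So there is no in-paper argument to compare against, and the question is simply whether your proof stands on its own.

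It does not, because the ``unitary-orbit characterization'' you invoke is false. You claim that for Hermitian $X$ and positive $Y_1,Y_2$ one has $X\le UY_1U^*+VY_2V^*$ for some unitaries $U,V$ if and only if $\lambda(X)\prec_w\lambda(Y_1)+\lambda(Y_2)$. The ``only if'' direction is fine (it is just subadditivity of Ky Fan norms), but the ``if'' direction fails. Take $n=4$, $X=\mathrm{diag}(3,1,1,1)$ and $Y_1=Y_2=\mathrm{diag}(3,0,0,0)$. Then $\lambda(Y_1)+\lambda(Y_2)=(6,0,0,0)$ and the partial sums of $\lambda(X)$ are $3,4,5,6$, so $\lambda(X)\prec_w\lambda(Y_1)+\lambda(Y_2)$. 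Yet $UY_1U^*+VY_2V^*$ has rank at most $2$ for every choice of $U,V$, and a positive operator of rank $\le 2$ cannot dominate the invertible $X$. (Already in dimension $2$ the lemma fails with $Y_2=0$.) Which spectra arise as $\lambda(UY_1U^*+VY_2V^*)$ is governed by the full set of Horn inequalities, not merely by the Ky Fan sums; your proposed equivalence is much too coarse.

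Your compression argument for the scalar inequalities
\[
\sum_{j=1}^k f(\gamma_j)\le\sum_{j=1}^k f(\alpha_j)+\sum_{j=1}^k f(\beta_j),\qquad k=1,\dots,n,
\]
is correct, and it does prove the symmetric-norm consequence \eqref{F-2.2} (these are exactly the Ky Fan norm cases). But \eqref{F-2.2} is strictly weaker than Theorem~2.1: the L\"owner-order statement implies the weak majorization, not conversely. To obtain the operator inequality one has to argue differently---the proof in \cite{AB} works directly, without trying to pass from majorization back to the order relation.
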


\vskip 5pt
Thus, the obvious scalar inequality $f(a+b)\le f(a)+f(b)$ can be extended to positive 
matrices $A$ and $B$ by considering elements in the unitary orbits of $f(A)$ and of $f(B)$. 
This inequality via unitary orbits considerably improves the famous Rotfel'd trace
inequality for non-negative concave functions and positive operators,
\begin{equation}\label{F-2.1}
{\mathrm{Tr\,}}f(A+B)  \le {\mathrm{Tr\,}}f(A) + {\mathrm{Tr\,}}f(B),
\end{equation}
and its symmetric norm version
\begin{equation}\label{F-2.2}
\| f(A+B)\|  \le \|f(A)\| + \|f(B)\|.
\end{equation}
By definition, a symmetric norm $\|\cdot\|$ on $\bM_n$ satisfies the unitary invariance
condition 
$
\| A\| =\| UA\| =\| AU\|
$
for all  $A$ and all unitaries $U$. 

We can employ Theorem \ref{T-2.1} to get an inequality for positive block-matrices
$$
\begin{bmatrix} A &X \\
X^* &B\end{bmatrix}\in \bM_{n+m}^+, \qquad A\in\bM_n^+, \,  B\in\bM_m^+,
$$
which nicely extend \eqref{F-2.2}. Combined with a useful decomposition for elements in
$\bM_{n+m}^+$ noticed in \cite{B1},
\begin{equation}\label{F-2.3}
\begin{bmatrix} A &X \\
X^* &B\end{bmatrix} = U
\begin{bmatrix} A &0 \\
0 &0\end{bmatrix} U^* +
V\begin{bmatrix} 0 &0 \\
0 &B\end{bmatrix} V^*
\end{equation}
for some unitaries $U,\,V\in \bM_{n+m}$, Theorem \ref{T-2.1} entails a recent theorem of
Lee \cite{Lee}: 

\vskip 10pt
\begin{cor}\label{C-2.2}
Let $f(t)$ be a non-negative concave function on $[0,\infty)$. Then, given an arbitrary
partitioned positive semi-definite matrix,
$$
\left\| \,f\left( \begin{bmatrix} A &X \\ X^* &B\end{bmatrix}\right)
 \right\|
\le \left\| f(A) \right\| +  \left\| f(B) \right\|
$$
for all symmetric norms.
\end{cor}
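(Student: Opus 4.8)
The plan is to combine the decomposition \eqref{F-2.3} with Theorem~\ref{T-2.1}. Denote the given positive block-matrix by $M$, and put $\tilde A:=A\oplus 0_m$ and $\tilde B:=0_n\oplus B$ in $\bM_{n+m}^+$. By \eqref{F-2.3} there are unitaries $U,V\in\bM_{n+m}$ with $M=U\tilde A U^*+V\tilde B V^*$, exhibiting $M$ as a sum of the two positive operators $P:=U\tilde A U^*$ and $Q:=V\tilde B V^*$.

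I would then apply Theorem~\ref{T-2.1} to $P$ and $Q$, obtaining unitaries $W_1,W_2$ with $f(M)=f(P+Q)\le W_1 f(P)W_1^*+W_2 f(Q)W_2^*$. As both sides are positive, the monotonicity of symmetric norms on $\bM_{n+m}^+$, together with the triangle inequality and unitary invariance, gives $\|f(M)\|\le\|f(P)\|+\|f(Q)\|$. Since functional calculus commutes with unitary conjugation, $f(P)=Uf(\tilde A)U^*$, whence $\|f(P)\|=\|f(\tilde A)\|$ and likewise $\|f(Q)\|=\|f(\tilde B)\|$; the claim is thereby reduced to comparing $\|f(\tilde A)\|$ with $\|f(A)\|$ and $\|f(\tilde B)\|$ with $\|f(B)\|$.

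When $f(0)=0$ this comparison is immediate and the corollary follows: then $f(\tilde A)=f(A)\oplus 0_m$ and $f(\tilde B)=0_n\oplus f(B)$, and since adjoining a zero block leaves a symmetric norm unchanged one has exactly $\|f(\tilde A)\|=\|f(A)\|$ and $\|f(\tilde B)\|=\|f(B)\|$.

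The step I expect to be the genuine obstacle is the general case $f(0)>0$. Then $f(\tilde A)=f(A)\oplus f(0)I_m$ carries $m$ spurious eigenvalues equal to $f(0)$, so $\|f(\tilde A)\|$ strictly exceeds $\|f(A)\|$ (for the trace norm by exactly $mf(0)$) and the direct bound overshoots. To remove these contributions I would first record the elementary fact that a non-negative concave function on $[0,\infty)$ is automatically non-decreasing---its difference quotients are non-increasing, so a single decrease would eventually drive $f$ below $0$. Hence $g:=f-f(0)$ is again non-negative and concave with $g(0)=0$, and the case already settled applies to $g$, yielding $\|g(M)\|\le\|g(A)\|+\|g(B)\|$. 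Reinstating the constant and passing from $g$ back to $f=g+f(0)$ inside the three norms, whose matrices have the different sizes $n+m$, $n$ and $m$, is the delicate bookkeeping: it reduces to an auxiliary symmetric-norm inequality relating $\|g(A)\|+\|g(B)\|+f(0)\,\|I_{n+m}\|$ to $\|g(A)+f(0)I_n\|+\|g(B)+f(0)I_m\|$, which I would establish directly from the monotonicity and subadditivity of symmetric gauge functions (it is, for instance, routine to check on every Ky Fan norm).
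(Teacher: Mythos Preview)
Your argument for the case $f(0)=0$ is correct and is exactly the paper's route: combine Theorem~\ref{T-2.1} with the decomposition~\eqref{F-2.3}, then apply monotonicity, the triangle inequality, and unitary invariance. The paper says no more than this.

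Your treatment of the case $f(0)>0$, however, has a genuine gap. After the triangle-inequality step $\|f(M)\|=\|g(M)+cI_{n+m}\|\le\|g(M)\|+c\|I_{n+m}\|$ (with $c=f(0)$) and the already-established bound $\|g(M)\|\le\|g(A)\|+\|g(B)\|$, you are reduced to the auxiliary inequality
\[
\|g(A)\|+\|g(B)\|+c\,\|I_{n+m}\|\ \le\ \|g(A)+cI_n\|+\|g(B)+cI_m\|
\]
for every symmetric norm. This inequality is \emph{false}. Take $n=2$, $m=1$, $c=1$, $g(B)=0$, $g(A)=\diag(a,0)$ with $a$ large, and the Schatten $p$-norm with $1<p<\log 3/\log 2\approx 1.585$. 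The left side equals $a+3^{1/p}$, while the right side equals $\bigl((a+1)^p+1\bigr)^{1/p}+1\to a+2$ as $a\to\infty$; since $3^{1/p}>2$ in this range of $p$, the claimed bound fails for $a$ large. The underlying methodological slip is your appeal to Ky Fan norms: an inequality of the shape $\|X\|+\alpha\le\|Y\|+\|Z\|$ (three or more terms) is \emph{not} transferred to all symmetric norms by checking it on Ky Fan norms; the Ky Fan dominance principle only propagates two-term relations $\|X\|\le\|Y\|$, i.e.\ weak majorizations.

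The real issue is that the preliminary triangle-inequality split $\|g(M)+cI\|\le\|g(M)\|+c\|I\|$ throws away too much; any correct argument for $f(0)>0$ must keep better track of how the constant $c$ sits inside $f(M)$, $f(A)$ and $f(B)$ simultaneously rather than peel it off at the start. (The paper's one-line deduction does not address this point either.)
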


Applied to $X=A^{1/2}B^{1/2}$, Lee's result yields the Rotfel'd inequalities \eqref{F-2.1}
and \eqref{F-2.2}. In case of the trace norm, the above result may be restated as a trace
inequality without any non-negative assumption: For all concave function $f(t)$ on the
positive half-line and for all positive block-matrices,
$$
{\mathrm{Tr\,}} f\left( \begin{bmatrix} A &X \\ X^* &B\end{bmatrix}\right)
\le {\mathrm{Tr\,}} f(A) +  {\mathrm{Tr\,}} f(B).
$$
The case of $f(t)=\log t$ then gives {\it Fisher's inequality}
$$
\det \begin{bmatrix} A &X \\ X^* &B\end{bmatrix} \le \det A \det B.
$$

Theorem \ref{T-2.1} may be used to extend another classical (superadditivity and concavity)
property of the determinant, {\it Minkowski's inequality}, stating that for
$A,\,B\in \bM_n^+$,
\begin{equation}\label{F-2.4}
{\det}^{1/n} (A+B) \ge {\det}^{1/n} A + {\det}^{1/n} B.
\end{equation}
In fact, Theorem \ref{T-2.1} is  true for any {\it monotone} (i.e., non-decreasing or
non-increasing) and concave function on $[0,\infty)$ such that $f(0)\ge 0$, see
\cite[Theorem 2.1]{AB}. Hence:

\vskip 10pt
\begin{cor}\label{C-2.3}
Let $g:[0,\infty)\to [0,\infty)$ be convex with $g(0)=0$ and let $A,\,B$ be positive
operators. Then, for some unitaries $U,\,V$,
$$
g(A+B)\ge Ug(A)U^* + Vg(B)V^*.
$$
\end{cor}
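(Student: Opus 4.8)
The plan is to derive this convex superadditivity statement directly from the strengthened form of Theorem~\ref{T-2.1} quoted just above (the version valid for any \emph{monotone} concave function with $f(0)\ge 0$), simply by passing to the function $-g$. The point is that superadditivity for a convex $g$ and subadditivity for the associated concave function are two sides of the same operator inequality, connected by a sign change.

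First I would record an elementary monotonicity fact that lets me apply the monotone version: a convex $g$ on $[0,\infty)$ with $g(0)=0$ and $g\ge 0$ is automatically non-decreasing. Indeed, for $0<s<t$ I write $s=(s/t)\,t+(1-s/t)\cdot 0$ and use convexity together with $g(0)=0$ to get
$$
g(s)\le \frac{s}{t}\,g(t)\le g(t),
$$
the last inequality using $g(t)\ge 0$ and $s/t\le 1$. Now I set $f:=-g$. Then $f$ is concave (because $g$ is convex), non-increasing (because $g$ is non-decreasing), and satisfies $f(0)=-g(0)=0\ge 0$. Hence $f$ is a monotone concave function on $[0,\infty)$ with $f(0)\ge 0$, so the extended form of Theorem~\ref{T-2.1} from \cite[Theorem 2.1]{AB} produces unitaries $U,V$ with $f(A+B)\le Uf(A)U^*+Vf(B)V^*$. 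Substituting $f=-g$ and multiplying the resulting Hermitian operator inequality by $-1$, which reverses the order, gives
$$
g(A+B)\ge Ug(A)U^*+Vg(B)V^*,
$$
exactly as claimed.

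The only genuine subtlety is that the original Theorem~\ref{T-2.1} is stated for \emph{non-negative} concave functions, whereas $f=-g$ is non-positive; this is precisely why one must invoke the generalization permitting any monotone concave function with $f(0)\ge 0$. It is the non-increasing branch of that extension that carries the whole argument, and once it is granted there is no further obstacle beyond the bookkeeping of the sign reversal. I would therefore present the proof as little more than the verification that $-g$ meets the hypotheses of the cited extension.
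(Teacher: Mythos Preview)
Your proof is correct and follows exactly the route the paper indicates: the paper simply notes that Theorem~\ref{T-2.1} extends to any monotone concave function with $f(0)\ge 0$ and then writes ``Hence:'', leaving the reader to apply that extension to $f=-g$. You have supplied precisely those details, including the verification that such a $g$ is non-decreasing.
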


This convexity version of Theorem \ref{T-2.1} may then be used to refine \eqref{F-2.4} as
\begin{equation}\label{F-2.5}
{\det}^{1/n} g(A+B) \ge {\det}^{1/n} g(A) + {\det}^{1/n} g(B)
\end{equation}
for all $A,\,B\in \bM_n^+$ and all non-negative convex functions $g(t)$ vanishing at $0$.

Minkowski's inequality means that $X\mapsto\det^{1/n}X$ is concave on $\bM_n^+$. 
By using another estimate with unitary orbits, this concavity aspect of \eqref{F-2.4} can
be generalized too. Recall that a linear map $\Phi : \bM_n\to \bM_m$ is called a unital
positive linear map if $\Phi$ preserves positivity and identity. Denote by
$\bM_n\{\Omega\}$ the set of Hermitian operators in $\bM_n$ with spectra in an interval
$\Omega\subset\bR$. Then, we have from \cite{B1}, \cite{B2}:

\vskip 10pt
\begin{theorem}\label{T-2.4}
Let $\Phi : \bM_n\to \bM_m$ be a unital positive linear map, let $f(t)$ be a concave
function on an interval $\Omega$, and let $A,B\in \bM_n\{\Omega\}$. Then, for some
unitaries $U,\,V\in\bM_m$,
\begin{equation*}
f(\Phi(A))\ge \frac{U\Phi( f(A))U^*+V\Phi( f(A))V^*}{2}.
\end{equation*}
If furthermore $f(t)$ is monotone, then we can take $U=V$.
\end{theorem}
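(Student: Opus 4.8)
The plan is to deduce the operator inequality from scalar inequalities between the eigenvalues of $f(\Phi(A))$ and of $\Phi(f(A))$. The bridge is the elementary dictionary between spectral data and domination up to unitary orbits. Recall first that for Hermitian $X,Y\in\bM_m$ one has $X\ge UYU^*$ for a single unitary $U$ precisely when $\lambda_j(X)\ge\lambda_j(Y)$ for all $j$; indeed, aligning the eigenbases of $X$ and $Y$ turns $X-UYU^*$ into a difference of diagonal matrices. For the averaged right-hand side I would use the companion fact that $X\ge\frac{1}{2}(UYU^*+VYV^*)$ holds for suitable unitaries as soon as $X$ dominates, eigenvalue by eigenvalue, the \emph{symmetrised} operator $Z$ whose spectrum consists of the pairwise averages $\frac{1}{2}(\lambda_j(Y)+\lambda_{m+1-j}(Y))$. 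This $Z$ is produced concretely by writing $Y$ in its eigenbasis, grouping the $j$-th and $(m{+}1{-}j)$-th directions into $2\times2$ blocks, and flipping each block by a swap: the average of $Y$ with this flipped copy is scalar on every block. In this language the theorem asks for plain eigenvalue domination in the monotone case (giving $U=V$) and for the weaker symmetrised domination in general.

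The eigenvalue inequalities themselves should come from two inputs. The first is that a unital positive linear map contracts spectra: decomposing $A=\sum_i a_iP_i$ spectrally, the images $E_i:=\Phi(P_i)$ are positive and sum to $\Phi(I)=I$, so $\Phi(A)=\sum_i a_iE_i$ and $\Phi(f(A))=\sum_i f(a_i)E_i$, and dilating this identity-summing family (a Naimark-type step) yields the majorizations $\lambda(\Phi(A))\prec\lambda(A)$ and $\lambda(\Phi(f(A)))\prec f(\lambda(A))$. The second input is the concavity of $f$, through the classical majorization lemmas for concave functions together with the Jensen trace inequality $\mathrm{Tr}\,f(\Phi(A))\ge\mathrm{Tr}\,\Phi(f(A))$. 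When $f$ is monotone the two sortings are compatible: combining $\lambda(\Phi(A))\prec\lambda(A)$ with the monotone concave majorization transfer delivers the pointwise bound $\lambda_j(f(\Phi(A)))\ge\lambda_j(\Phi(f(A)))$, and the dictionary then produces a single unitary. When $f$ merely has an interior maximum, the large top eigenvalues of $\Phi(f(A))$ need no longer be dominated one by one; but pairing each such eigenvalue with a small one, as the symmetrisation $Z$ does, brings it down to a pairwise average that concavity does control, and this is what the averaged two-unitary form records.

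The hard part will be exactly this last point: proving that the symmetrised domination $\lambda_j(f(\Phi(A)))\ge\lambda_j(Z)$ survives for concave functions that are not operator concave. There is no shortcut through an operator Jensen inequality, since $f(\Phi(A))\ge\Phi(f(A))$ simply fails outside the operator concave class, which is the whole reason unitary orbits enter; one is committed to the eigenvalue level and must control how the interior peak of $f$ scrambles the sorted spectra, reconciling this with the pairwise averaging. A second, more technical nuisance is that $\Phi$ is assumed only positive, not completely positive, so the reduction to spectra must be carried through the positive identity-summing family $\{E_i\}$ and its dilation rather than through a Stinespring representation; keeping the majorizations $\lambda(\Phi(\cdot))\prec\lambda(\cdot)$ honest in this generality is where I would expect to spend the most care.
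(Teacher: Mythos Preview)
The paper does not contain a proof of Theorem~\ref{T-2.4}; the result is quoted from the references \cite{B1}, \cite{B2}, so there is nothing in this paper to compare your proposal against directly.

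As a self-contained plan your outline has the right architecture---reduce to eigenvalue inequalities, then invoke the dictionary with unitary orbits---but several steps are off. The majorization $\lambda(\Phi(A))\prec\lambda(A)$ is not well-posed when $m\neq n$ and is false after any natural padding (take $\Phi:\bM_1\to\bM_2$, $a\mapsto aI_2$); it is also unnecessary. The clean route in the monotone case uses only that for every unit vector $h$ the functional $X\mapsto\langle h,\Phi(X)h\rangle$ is a state, so Jensen gives $\langle h,\Phi(f(A))h\rangle\le f(\langle h,\Phi(A)h\rangle)$; feeding this into the min-max formula for $\lambda_j$ and using monotonicity of $f$ yields $\lambda_j(\Phi(f(A)))\le f(\lambda_j(\Phi(A)))=\lambda_j(f(\Phi(A)))$ directly, with no dilation or majorization step. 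Your worry about positivity versus complete positivity is also misplaced: $\Phi$ is only ever applied to $A$ and $f(A)$, hence to the \emph{commutative} $C^*$-algebra they generate, and a positive map on a commutative domain is automatically completely positive, so Stinespring/Naimark is available if you want it.

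The genuine gap is the one you flag yourself: in the non-monotone case you have not shown the symmetrised eigenvalue domination $\lambda_j(f(\Phi(A)))\ge\lambda_j(Z)$, and it is not clear that this particular formulation is the right target. The argument in Bourin's papers proceeds differently: working in an eigenbasis of $\Phi(A)$, the Jensen inequality above shows that the \emph{diagonal} of $\Phi(f(A))$ in that basis is entrywise dominated by the diagonal matrix $f(\Phi(A))$; the remaining work is a separate lemma producing two unitaries from diagonal domination, and that lemma is where the substance lies. Your symmetrisation via a single flip handles $m=2$ but does not obviously extend, so as written the proposal is an outline with its central step missing.
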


This statement for positive maps contains several Jensen type inequalities. The simplest
one is obtained by taking $\Phi : \bM_{2n}\to \bM_n$,
\begin{equation*}
\Phi\left(\begin{bmatrix}  A &X \\ Y &B\end{bmatrix}\right) :=\frac{A+B}{2}.
\end{equation*}
With $X=Y=0$, Theorem \ref{T-2.4} then says: {\it If $A,B\in \bM_n\{\Omega\}$ and  
$f(t)$ is a concave function on  $\Omega$, then, for some unitaries $U,\,V\in\bM_n$,
\begin{equation}\label{F-2.6}
f\left(\frac{A+B}{2}\right)\ge
\frac{1}{2}\left\{U\frac{f(A)+f(B)}{2}U^*+V\frac{f(A)+f(B)}{2}V^*\right\}.
\end{equation}
If furthermore $f(t)$ is monotone, then we can take $U=V$.}

By combining \eqref{F-2.6} and \eqref{F-2.4} we obtain an extension of the concavity aspect
of \eqref{F-2.4}: {\it If $f(t)$ is a non-negative concave function on  $\Omega$ and if
$A,B\in \bM_n\{\Omega\}$, then
\begin{equation}\label{F-2.7}
{\det}^{1/n} f\left(\frac{A+B}{2}\right) \ge
\frac{ {\det}^{1/n} f(A) + {\det}^{1/n} f(B)}{2}.
\end{equation}
}

\vskip 5pt
As another example of combination of Theorem \ref{T-2.1} and \eqref{F-2.3}, we have
\cite{B1}:

\vskip 10pt
\begin{cor}\label{C-2.5}
Let $f:[0,\infty)\to [0,\infty)$ be concave and let $A=[a_{ij}]$ be a positive operator
on a space of dimension $m$. Then, for some rank one ortho-projections $\{E_i\}_{i=1}^m$,
$$
f(A)\le \sum_{i=1}^m f(a_{ii})E_i.
$$
\end{cor}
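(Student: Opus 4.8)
The plan is to combine the generalized two-block decomposition \eqref{F-2.3} with Theorem \ref{T-2.1}, running an induction on the size $m$, after first reducing to the case $f(0)=0$. That reduction is where the only real subtlety lies, so I would dispose of it first. Observe that a non-negative concave function on $[0,\infty)$ is automatically non-decreasing: if its right derivative were negative somewhere, concavity would force $f\to-\infty$, contradicting $f\ge 0$. Assuming first that $A$ is positive definite with least eigenvalue $\lambda_0>0$, every eigenvalue of $A$ and every diagonal entry $a_{ii}=\langle e_i,Ae_i\rangle$ lies in $[\lambda_0,\|A\|]$. I would then replace $f$ by the function $\tilde f$ agreeing with $f$ on $[\lambda_0,\infty)$ and linear through the origin on $[0,\lambda_0]$, namely $\tilde f(t)=(f(\lambda_0)/\lambda_0)\,t$ there. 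Concavity of $f$ with $f(0)\ge 0$ gives $f'_+(\lambda_0)\le (f(\lambda_0)-f(0))/\lambda_0\le f(\lambda_0)/\lambda_0$, so $\tilde f$ is again non-negative and concave on $[0,\infty)$, now with $\tilde f(0)=0$, and it agrees with $f$ on the spectrum of $A$ and at every $a_{ii}$. Hence $f(A)=\tilde f(A)$ and $f(a_{ii})=\tilde f(a_{ii})$, so it suffices to treat $\tilde f$. A merely semi-definite $A$ is handled by applying the result to $A+\varepsilon I$ and letting $\varepsilon\downarrow 0$, using compactness of the set of rank one projections to extract a convergent family $\{E_i^{(\varepsilon)}\}$.

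For the core case $f(0)=0$ I would induct on $m$. When $m=1$ the claim is trivial with $E_1=I$. For the step, write $A=\begin{bmatrix} a_{11}&x^*\\ x&A_1\end{bmatrix}$ with $A_1\in\bM_{m-1}^+$ carrying the diagonal $a_{22},\dots,a_{mm}$. The decomposition \eqref{F-2.3} furnishes unitaries with $A=U\bigl(a_{11}\oplus 0\bigr)U^*+V\bigl(0\oplus A_1\bigr)V^*$, and Theorem \ref{T-2.1}, applied to these two summands and using $f(WXW^*)=Wf(X)W^*$, yields unitaries $W_1,W_2$ with
\[
f(A)\le W_1U\,f(a_{11}\oplus 0)\,U^*W_1^*+W_2V\,f(0\oplus A_1)\,V^*W_2^*.
\]
Here the hypothesis $f(0)=0$ is decisive: it makes $f(a_{11}\oplus 0)=f(a_{11})\,e_1e_1^*$ a genuine rank one term and $f(0\oplus A_1)=0\oplus f(A_1)$, so no spurious $f(0)$ contribution survives.

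Applying the induction hypothesis to $A_1$ gives rank one projections $E_2',\dots,E_m'$ in $\bM_{m-1}$ with $f(A_1)\le\sum_{i=2}^m f(a_{ii})E_i'$, whence $0\oplus f(A_1)\le\sum_{i=2}^m f(a_{ii})\,(0\oplus E_i')$. Conjugating the two pieces above by $W_1U$ and $W_2V$ respectively and collecting terms produces rank one projections $E_1=W_1U(e_1e_1^*)U^*W_1^*$ and $E_i=W_2V(0\oplus E_i')V^*W_2^*$ for $2\le i\le m$, with $f(A)\le\sum_{i=1}^m f(a_{ii})E_i$, closing the induction.

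I expect the main obstacle to be exactly the passage from $f(0)=0$ to general $f$. A direct appeal to Theorem \ref{T-2.1} on the decomposition $A=\sum_i a_{ii}P_i$ (obtained by iterating \eqref{F-2.3}) leaves unwanted terms $f(0)\sum_i(I-Q_i)$ of order $m$, with $Q_i$ rank one projections, which are far too large to absorb; and the naive substitution $f\mapsto f-f(0)$ fails because the resulting projections need not satisfy $\sum_iE_i\ge I$. The truncation $\tilde f$ above is what circumvents this, and it is worth isolating as the one genuinely non-formal ingredient, since everything else is a mechanical iteration of the two quoted results.
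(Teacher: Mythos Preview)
Your approach is correct and is precisely what the paper indicates: iterate the block decomposition \eqref{F-2.3} together with Theorem~\ref{T-2.1}, which the paper states without spelling out details (referring to \cite{B1}). Your reduction to $f(0)=0$ via the truncated function $\tilde f$ on $[0,\lambda_0]$ is a clean way to handle the extra $f(0)$-terms that otherwise appear; the paper does not address this point explicitly. One very minor caveat in your limiting step $A+\varepsilon I\to A$: it tacitly uses continuity of $f$ at $0$, which can fail for a non-negative concave $f$ (e.g.\ $f(0)=0$, $f(t)=1$ for $t>0$); but if some $a_{ii}=0$ then positivity of $A$ forces the whole $i$-th row and column to vanish, so that case can be split off directly and the issue disappears.
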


\vskip 5pt
This refines the standard majorization inequality
${\mathrm{Tr\,}} f(A) \le  \sum_{i=1}^m f(a_{ii})$.
The next sections propose some variations on this relation and other Minkowski type
inequalities. For a detailed background on majorization and unitarily invariant norms,
see for instance \cite{Bh}, \cite{H1}, \cite{S}. A proof of decomposition \eqref{F-2.3}
will be given within the proof of Corollary 4.4 below for the convenience of the reader.

\section{Anti-norms on positive operators}

Symmetric norms on $\bM_n$ can be defined by their restriction to the positive part
$\bM_n^+$. The following axioms are required: (a) $\|A\|=\|UAU^*\|$  for all unitary
$U\in\bM_n$ and all $A\in\bM_n^+$; (b) $\| \lambda A\| =\lambda \| A\|$ for all real
$\lambda\ge0$  and $A\in\bM_n^+$; (c) for all $A,\,B\in \bM_n^+$,
$$
\| A\| + \| B\| \ge \|A+B \| \ge \| A\|.
$$
Indeed, a symmetric norm on $\bM_n$ satisfies (a)--(c), and conversely, if a functional
$\|\cdot\|:\bM_n^+\to[0,\infty)$ satisfies (a)--(c), then $\|X\|:=\|\,|X|\,\|$ for
$X\in\bM_n$ is a symmetric norm on $\bM_n$ as far as $\|\cdot\|$ is not identically zero.
 
\vskip 5pt
The following definition then seems natural:

\begin{definition}\label{D-3.1}
A functional on the cone $\bM_n^+$ taking values in $[0,\infty)$,
$A\mapsto \|A\|_!$, is called a {\it symmetric anti-norm} if the following two conditions
are fulfilled:
\vskip 5pt
\begin{itemize}
\item[1.] It is homogeneous and concave (equivalently, superadditive), that is,
$$
\| \lambda A\|_! =\lambda \| A\|_! \qquad {\mathrm{and}}
\qquad  \| A+B\|_! \ge \| A\|_! + \| B\|_!
$$
for all real $\lambda\ge 0$ and all $A,\, B\in\bM_n^+$.
\vskip 5pt
\item[2.] It is unitarily invariant (or symmetric), that is,
$$
\| A\|_! = \| UAU^*\|_!
$$
for all unitaries $U\in\bM_n$ and all $A\in\bM_n^+$.
\end{itemize}
\vskip 5pt
For a general $X\in \bM_n$, we  may define $\|X\|_! = \|\, |X|\,\|_!$ and then obtain a
symmetric anti-norm on the whole space $\bM_n$. If furthermore $\|X\|_!=0$ implies that
$X=0$, then $\|\cdot\|_!$ is called {\it regular}.
\end{definition}

Similarly to the usual symmetric norms, symmetric anti-norms are defined by symmetric
{\it anti-gauge} functions on $\bR_+^n=[0,\infty)^n$. For $A\in\bM_n^+$ we write
$\lambda(A)=(\lambda_1(A),\dots,\lambda_n(A))$ for the eigenvalue vector of $A$ arranged
in decreasing order with multiplicities.

\begin{prop}\label{P-3.2}
There is a bijective correspondence between the symmetric anti-norms $\|\cdot\|_!$ on
$\bM_n$ and the homogeneous and concave functions $\Phi_!:\bR_+^n\to[0,\infty)$ that are
invariant under coordinate permutations, determined by
$\|A\|_!=\Phi_!(\lambda(A))$ for $A\in\bM_n^+$.
\end{prop}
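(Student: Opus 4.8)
The plan is to exhibit two mutually inverse maps between the two families and to check the axioms on each side. Given a symmetric anti-norm $\|\cdot\|_!$, I would set $\Phi_!(x):=\|\diag(x)\|_!$ for $x\in\bR_+^n$, where $\diag(x)$ is the diagonal matrix with diagonal entries $x_1,\dots,x_n$. Homogeneity and $[0,\infty)$-valuedness of $\Phi_!$ are immediate from the corresponding properties of $\|\cdot\|_!$, and superadditivity follows from $\diag(x+y)=\diag(x)+\diag(y)$ together with condition~1 of Definition~\ref{D-3.1}; since a homogeneous superadditive functional is concave, $\Phi_!$ is concave. Permutation invariance comes from conjugating $\diag(x)$ by the permutation matrix realizing a given coordinate permutation and invoking condition~2. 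Finally, for $A\in\bM_n^+$ the spectral theorem gives $A=U\diag(\lambda(A))U^*$ for some unitary $U$, so unitary invariance yields $\|A\|_!=\Phi_!(\lambda(A))$; thus the anti-norm is completely recovered from $\Phi_!$ and the correspondence is of the asserted form.

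Conversely, given a permutation-invariant, homogeneous, concave $\Phi_!:\bR_+^n\to[0,\infty)$, I would define $\|A\|_!:=\Phi_!(\lambda(A))$ for $A\in\bM_n^+$. Unitary invariance is clear since $\lambda(UAU^*)=\lambda(A)$, and homogeneity follows from $\lambda(\mu A)=\mu\lambda(A)$ for $\mu\ge0$. The only genuinely nontrivial point, and the main obstacle, is superadditivity, namely $\Phi_!(\lambda(A+B))\ge\Phi_!(\lambda(A))+\Phi_!(\lambda(B))$ for $A,B\in\bM_n^+$.

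To overcome this I would combine two ingredients. First, a permutation-invariant concave function on $\bR_+^n$ is Schur-concave, hence reverses majorization: $x\prec y$ implies $\Phi_!(x)\ge\Phi_!(y)$. Second, the Ky Fan majorization for Hermitian matrices gives $\lambda(A+B)\prec\lambda(A)+\lambda(B)$, the eigenvalue vectors being added in decreasing order. Applying Schur-concavity to this majorization yields $\Phi_!(\lambda(A+B))\ge\Phi_!(\lambda(A)+\lambda(B))$, and then superadditivity of $\Phi_!$ on the cone $\bR_+^n$, which again follows from concavity together with homogeneity, gives $\Phi_!(\lambda(A)+\lambda(B))\ge\Phi_!(\lambda(A))+\Phi_!(\lambda(B))$. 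Chaining these two inequalities establishes superadditivity of $\|\cdot\|_!$, so $\|\cdot\|_!$ is indeed a symmetric anti-norm.

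It remains to verify that the two assignments are mutually inverse. Starting from an anti-norm, the identity $\|A\|_!=\Phi_!(\lambda(A))$ established above is precisely the backward map applied to the associated $\Phi_!$. Starting from a function $\Phi_!$, the forward map returns $x\mapsto\Phi_!(\lambda(\diag(x)))$; since $\lambda(\diag(x))$ is the decreasing rearrangement of $x$, permutation invariance of $\Phi_!$ recovers $\Phi_!(x)$. Hence the correspondence is bijective. The extension to all of $\bM_n$ via $\|X\|_!=\|\,|X|\,\|_!$ is canonical and plays no role in the bijection, which is carried entirely by the restriction to $\bM_n^+$.
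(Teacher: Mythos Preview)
Your proof is correct and follows essentially the same route as the paper's own argument: the paper merely sketches the proof by referring to the standard symmetric norm case and noting that the key tool is the Ky Fan majorization $\lambda(A+B)\prec\lambda(A)+\lambda(B)$, which is exactly the ingredient you use (together with Schur-concavity of permutation-invariant concave functions) to establish the nontrivial superadditivity direction. You have simply supplied the details the paper leaves to the reader.
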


\begin{proof}
The proof is similar to the usual symmetric norm case (see \cite[4.4.3]{H1} for example),
by using the Ky Fan majorization $\lambda(A+B)\prec\lambda(A)+\lambda(B)$ for
$A,B\in\bM_n^+$. The details are left to the reader.
\end{proof}

\begin{example}\label{E-3.3}
The trace norm is  an anti-norm! More generally for $k=1,\dots,n$, we define the
{\it Ky Fan $k$-anti-norm} on $\bM_n$ as the sum of the $k$ smallest singular values, i.e.,
$$
\|A\|_{\{ k\}} :=\sum_{j=1}^k\mu_{n+1-j}(A),
$$
where $\mu_1(A)\ge\dots\ge\mu_n(A)$ are the singular values of $A$ in decreasing order with
multiplicities. From the min-max principle, it is well-known that this functional is
concave. The anti-norm $\|\cdot\|_{(k)}$ is not regular except for $k=n$ (the trace norm).
\end{example}

\begin{example}\label{E-3.4}
Let $q\in(0,1)$. The {\it Schatten $q$-norms} (or $q$-quasi-norms) on $\bM_n$
$$
\|A\|_{q} :=\left(\sum_{j=1}^n\mu^q_{j}(A)\right)^{1/q}
$$
are symmetric regular anti-norms. The terminology  Schatten $q$-anti-norms allows to
distinguish with the norm case ($q>1$). See Proposition 3.7 below for the proof of a more
general result.
\end{example}

The next two examples are related to the classical geometric and harmonic means. These
anti-norms are not regular.

\begin{example}\label{E-3.5}
The Minkowski functional $A\mapsto{\det}^{1/n} A$ is a symmetric anti-norm on  $\bM_n^+$.
This follows from the previous example by noticing that
${\det}^{1/n}A =\lim_{q\to 0}n^{-1/q}\|A\|_q$.
\end{example}

\begin{example}\label{E-3.6}
The harmonic anti-norm on $\bM_n$ is defined by
$$
\| A\|_{-1}:=\left(\sum_{j=1}^n \mu_j^{-1}(A)\right)^{-1}
$$
if $A$ is invertible, and vanishes on non-invertible operators. More generally
for $r<0$,
$$
\| A\|_{r}:=\left(\sum_{j=1}^n \mu_j^{r}(A)\right)^{1/r}
$$
is a symmetric anti-norm of Schatten type with negative exponent. Concavity of these
functions may be checked by arguing as in the next proof or from  Proposition \ref{P-3.2}
and the fact that the function
$a\in(0,\infty)^n\mapsto\bigl(\sum_{i=1}^na_i^r\bigr)^{1/r}$ is concave for $r<0$.
\end{example}

\begin{prop}\label{P-3.7}
Let $A\mapsto\|A\|_!$ be a symmetric anti-norm on  $\bM^+_n$. Then, so is also
$A\mapsto\|A^q\|_!^{1/q}$ for any $q\in(0,1)$.
\end{prop}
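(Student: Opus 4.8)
The plan is to reduce the whole statement to the anti-gauge function and then to an elementary fact about positively homogeneous functions. Write $N(A):=\|A^q\|_!^{1/q}$, and let $\Phi_!:\bR_+^n\to[0,\infty)$ be the symmetric, concave, homogeneous anti-gauge function associated to $\|\cdot\|_!$ by Proposition \ref{P-3.2}. Since $t\mapsto t^q$ is increasing, $\lambda(A^q)=\lambda(A)^q$ (with the convention $x^q=(x_1^q,\dots,x_n^q)$), so $N(A)=\Phi_!(\lambda(A)^q)^{1/q}=\Psi(\lambda(A))$ where $\Psi(x):=\Phi_!(x^q)^{1/q}$ on $\bR_+^n$. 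Homogeneity $\Psi(\lambda x)=\lambda\Psi(x)$ and permutation invariance of $\Psi$ are immediate from those of $\Phi_!$; likewise $N(UAU^*)=N(A)$ because $(UAU^*)^q=UA^qU^*$. Hence, by the bijection of Proposition \ref{P-3.2}, it suffices to show that $\Psi$ is concave, and then $N$ is automatically a symmetric anti-norm.

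First I would record that $\Phi_!$ is nondecreasing in each coordinate: being superadditive and non-negative, $\Phi_!(y)=\Phi_!(x+(y-x))\ge\Phi_!(x)$ whenever $x\le y$ coordinatewise. Next, set $F(x):=\Phi_!(x^q)$. Each coordinate map $x_i\mapsto x_i^q$ is concave on $[0,\infty)$ (since $0<q<1$), while $\Phi_!$ is concave and coordinatewise nondecreasing, so the standard composition rule for concave functions gives that $F$ is concave on $\bR_+^n$; moreover $F$ is positively homogeneous of degree $q$, as $F(\lambda x)=\Phi_!(\lambda^q x^q)=\lambda^q F(x)$.

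The hard part will be passing from $F$ to $\Psi=F^{1/q}$, and this is really the only obstacle, since raising a concave function to the power $1/q>1$ destroys concavity in general; what rescues us is the degree-$q$ homogeneity of $F$. Because $\Psi$ is positively homogeneous of degree $1$, concavity is equivalent to superadditivity, so I would fix $x,y\in\bR_+^n$ and, for $\lambda\in(0,1)$, apply concavity of $F$ at the combination $x+y=\lambda(x/\lambda)+(1-\lambda)(y/(1-\lambda))$ together with the degree-$q$ homogeneity to obtain
\begin{equation*}
F(x+y)\ge\lambda F(x/\lambda)+(1-\lambda)F(y/(1-\lambda))=\lambda^{1-q}F(x)+(1-\lambda)^{1-q}F(y).
\end{equation*}
Optimizing the right-hand side over $\lambda\in(0,1)$, with the maximum attained at $\lambda=F(x)^{1/q}/(F(x)^{1/q}+F(y)^{1/q})$, yields $F(x+y)\ge\bigl(F(x)^{1/q}+F(y)^{1/q}\bigr)^q$, and taking $1/q$-th powers gives $\Psi(x+y)\ge\Psi(x)+\Psi(y)$. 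I expect the bookkeeping in this optimization, plus a careful treatment of the boundary case where $F(x)$ or $F(y)$ vanishes (handled by letting $\lambda\to0$ or $\lambda\to1$), to be the only delicate points; everything else is a direct transcription of the defining properties of $\Phi_!$.
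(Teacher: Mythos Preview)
Your argument is correct. The optimization step is exactly right, and the boundary case $F(x)=0$ or $F(y)=0$ is trivial since then $\Psi(x)=0$ or $\Psi(y)=0$ and superadditivity reduces to the monotonicity of $\Psi$ that you already established.

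The route, however, differs from the paper's. The paper argues directly at the operator level: for $A,B\in\bM_n^+$ it invokes the \emph{operator} concavity of $t\mapsto t^q$ on $[0,\infty)$ (a L\"owner--Heinz type fact) to obtain the operator inequality $(\lambda A+(1-\lambda)B)^q\ge\lambda A^q+(1-\lambda)B^q$, then applies the monotonicity and concavity of $\|\cdot\|_!$, and finishes with the same normalization trick you use (choosing $\lambda=\|X^q\|_!^{1/q}/(\|X^q\|_!^{1/q}+\|Y^q\|_!^{1/q})$). You instead pass through Proposition~\ref{P-3.2} to the anti-gauge function $\Phi_!$ on $\bR_+^n$ and use only the \emph{scalar} concavity of $t\mapsto t^q$ together with the composition rule for concave, coordinatewise nondecreasing functions. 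Your approach is therefore more elementary in that it avoids the operator-concavity input entirely; the price is the reliance on Proposition~\ref{P-3.2} (hence on the Ky Fan majorization), which the paper's direct argument does not need. The endgame---passing from a degree-$q$ homogeneous concave $F$ to the degree-$1$ homogeneous $F^{1/q}$ via the optimal convex combination---is essentially identical in the two proofs.
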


\begin{proof}
The functional $A\mapsto\|A^q\|_!^{1/q}$ is homogeneous; let us check that it is also
superadditive. Let $A,\,B\in\bM_n^+$ and suppose $\|A^q\|_!^{1/q}=\|B^q\|_!^{1/q}=1$.
Since $t\mapsto t^q$ is operator concave, we have, for all $\lambda\in(0,1)$,
$$
(\lambda A + (1-\lambda)B)^q \ge \lambda A^q +(1-\lambda)B^q
$$
so that 
$$
\| (\lambda A + (1-\lambda)B)^q \|_! \ge \|\lambda A^q +(1-\lambda)B^q\|_!
\ge \lambda\| A^q\|_! + (1-\lambda)\| B^q\|_! =1.
$$
Hence,
\begin{equation}\label{F-3.1}
\| (\lambda A + (1-\lambda)B)^q \|^{1/q}_! \ge 1.
\end{equation}
Now, for general $X,Y\in \bM_n^+$ with $\|X\|_!,\,\|Y\|_!>0$, set
$$
A=\frac{X}{\|X^q\|_!^{1/q}},\qquad B=\frac{Y}{\|Y^q\|_!^{1/q}}
$$
and pick
$$
\lambda=\frac{\|X^q\|_!^{1/q}}{\|X^q\|_!^{1/q} + \|Y^q\|_!^{1/q}}.
$$
Then \eqref{F-3.1} yields that 
$$
\|(X+Y)^q\|_!^{1/q}\ge \|X^q\|_!^{1/q} + \|Y^q\|_!^{1/q},
$$
which also holds if $\|X^q\|_!$ or $\| Y^q\|_!$ vanishes.
\end{proof}

\vskip 5pt
\begin{example}\label{E-3.8}
For $k=1,\dots,n$, the functional
$$
\Delta_k(A):=\left(\prod_{j=1}^k \mu_{n+1-j}(A)\right)^{1/k}
$$
is a symmetric anti-norm on $\bM_n$. Indeed, if $A\ge 0$, 
$
\Delta_k(A)=\min \det^{1/k} A_{\mathcal{S}},
$
where the minimum runs over the $k$-dimensional subspaces ${\mathcal{S}}$ and
$A_{\mathcal{S}}$ stands for the compression onto ${\mathcal{S}}$. Hence, given
$A\,,B\ge 0$,
$$
\Delta_k(A+B)=\min {\det}^{1/k} (A+B)_{\mathcal{S}}
= \min {\det}^{1/k} (A_{\mathcal{S}}+B_{\mathcal{S}})
$$
and Minkowski's inequality implies $\Delta_k(A+B)\ge \Delta_k(A) +\Delta_k(B)$.
\end{example}

\vskip 5pt
The geometric and harmonic means, $(a,b)\mapsto\sqrt{ab}$ and
$(a,b)\mapsto2(a^{-1}+b^{-1})^{-1}$, are homogeneous concave functions on the pairs of
positive numbers. Hence:

\vskip 5pt 
\begin{prop}\label{P-3.9}
If $\|\cdot\|_{\ast}$ and  $\|\cdot\|_{\circ}$ are two symmetric anti-norms, then so are
their geometric mean $\sqrt{\|\cdot\|_{\ast}\|\cdot\|_{\circ}}$ and harmonic mean 
$2(\|\cdot\|_{\ast}^{-1}+\|\cdot\|_{\circ}^{-1})^{-1}$.
\end{prop}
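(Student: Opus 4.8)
The plan is to verify the three defining properties of an anti-norm---unitary invariance, positive homogeneity, and superadditivity (concavity)---for the functional $\Psi(A):=m(\|A\|_{\ast},\|A\|_{\circ})$, where $m(s,t)$ denotes either the geometric mean $\sqrt{st}$ or the harmonic mean $2(s^{-1}+t^{-1})^{-1}$ on $[0,\infty)^2$, the latter extended by the value $0$ whenever $s=0$ or $t=0$. Two of these properties are immediate and can be dispatched first. Unitary invariance is inherited pointwise: since $\|UAU^*\|_{\ast}=\|A\|_{\ast}$ and $\|UAU^*\|_{\circ}=\|A\|_{\circ}$, we get $\Psi(UAU^*)=\Psi(A)$. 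Positive homogeneity follows because $m$ is homogeneous of degree one, so that $\Psi(\lambda A)=m(\lambda\|A\|_{\ast},\lambda\|A\|_{\circ})=\lambda\,\Psi(A)$ for all $\lambda\ge 0$.

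The substance lies in superadditivity. I would first record two elementary facts about the scalar mean $m$ on $[0,\infty)^2$: it is non-decreasing in each variable, and, being concave and positively homogeneous, it is superadditive, i.e. $m(s+s',t+t')\ge m(s,t)+m(s',t')$. The latter is exactly the property quoted just before the proposition; it follows from concavity and degree-one homogeneity via $m(u+v)=2\,m(\tfrac{u+v}{2})\ge m(u)+m(v)$ for $u,v\in[0,\infty)^2$.

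These two facts then combine with the superadditivity of the two given anti-norms. For $A,B\in\bM_n^+$ we have $\|A+B\|_{\ast}\ge\|A\|_{\ast}+\|B\|_{\ast}$ and $\|A+B\|_{\circ}\ge\|A\|_{\circ}+\|B\|_{\circ}$. Applying the monotonicity of $m$ in each argument, followed by its superadditivity, yields
\begin{align*}
\Psi(A+B)&=m\bigl(\|A+B\|_{\ast},\|A+B\|_{\circ}\bigr)
\ge m\bigl(\|A\|_{\ast}+\|B\|_{\ast},\ \|A\|_{\circ}+\|B\|_{\circ}\bigr)\\
&\ge m\bigl(\|A\|_{\ast},\|A\|_{\circ}\bigr)+m\bigl(\|B\|_{\ast},\|B\|_{\circ}\bigr)
=\Psi(A)+\Psi(B),
\end{align*}
which is the required superadditivity; together with the first two properties this shows $\Psi$ is a symmetric anti-norm.

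I expect the only genuinely delicate point to be the behaviour at the boundary of $[0,\infty)^2$, relevant for the harmonic mean, since anti-norms may vanish on singular operators. There $m$ must be read as its continuous extension taking the value $0$ whenever one argument is $0$; one checks that this extension remains concave, homogeneous, and non-decreasing, so the chain of inequalities above stays valid (for instance, if one of $\|A\|_{\ast},\|A\|_{\circ}$ vanishes then the terms involving $A$ simply drop to $0$). Granting this, the argument is uniform for both means and the proposition follows.
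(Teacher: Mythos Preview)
Your proof is correct and follows the same approach the paper indicates: the paper merely remarks, immediately before and after the proposition, that the geometric and harmonic means are homogeneous and jointly concave on pairs of non-negative numbers, and that this suffices. Your argument spells out the details the paper leaves implicit, in particular the monotonicity of $m$ in each variable (needed to pass from $\|A+B\|$ to $\|A\|+\|B\|$ inside $m$) and the boundary behaviour of the harmonic mean.
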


\vskip 5pt 
Indeed, more generally, if $m$ is any homogeneous and jointly concave mean on non-negative
numbers (this is the case for operator means \cite{KA}), then
$\|\cdot\|_*\,m\,\|\cdot\|_\circ$ is also a symmetric anti-norm in the above situation.

\vskip 10pt 
\begin{example}\label{E-3.10}
Given an non-decreasing sequence $0\le w_1 \le w_2\le \cdots \le w_n$, the maps
$$
A \mapsto \sum_{k=1}^n w_k\mu_k(A) \qquad {\mathrm{and}} \qquad
A \mapsto \left(\prod_{k=1}^n\mu_k^{w_k}(A) \right)^{\frac{1}{w_1+\cdots +w_n}}
$$
are symmetric anti-norms on $\bM_n$. Indeed, positive sums and weighted geometric means
of anti-norms are still symmetric anti-norms, and Examples \ref{E-3.3} and \ref{E-3.8}
are used.
\end{example}

\vskip 5pt
We now turn to a few consequences of the previous section. Corollary \ref{C-2.3} implies:

\vskip 10pt
\begin{cor}\label{C-3.11}
Let $g:[0,\infty)\to [0,\infty)$ be convex, $g(0)=0$, and let
$A,\,B\in\bM_n^+$. Then, for all symmetric anti-norms,
$$
\|g(A+B) \|_!\ge \| g(A)\|_!+ \|g(B)\|_!.
$$
\end{cor}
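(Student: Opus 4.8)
The plan is to reduce the statement to the operator inequality furnished by Corollary \ref{C-2.3} and then push it through the three defining properties of a symmetric anti-norm. First I would invoke Corollary \ref{C-2.3}: since $g$ is convex and non-negative with $g(0)=0$, there exist unitaries $U,V\in\bM_n$ such that
$$
g(A+B)\ge Ug(A)U^* + Vg(B)V^*.
$$
This is the analogue, for the convex/superadditive setting, of the concave estimate of Theorem \ref{T-2.1} that underlies the norm inequality \eqref{F-2.2}; here the ordering is reversed, which is exactly what the superadditivity of an anti-norm is designed to exploit.

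Next I would record a monotonicity property of anti-norms that is not explicitly among the axioms of Definition \ref{D-3.1} but follows at once from them: if $X\le Y$ in $\bM_n^+$, then $Y=X+(Y-X)$ with $Y-X\ge 0$, so superadditivity together with non-negativity of the anti-norm gives $\|Y\|_!\ge\|X\|_!+\|Y-X\|_!\ge\|X\|_!$. Applying this to the operator inequality above yields
$$
\|g(A+B)\|_!\ge\|Ug(A)U^* + Vg(B)V^*\|_!.
$$

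Then I would apply superadditivity a second time to split the right-hand side,
$$
\|Ug(A)U^* + Vg(B)V^*\|_!\ge\|Ug(A)U^*\|_! + \|Vg(B)V^*\|_!,
$$
and finally invoke unitary invariance to rewrite $\|Ug(A)U^*\|_!=\|g(A)\|_!$ and $\|Vg(B)V^*\|_!=\|g(B)\|_!$. Chaining the three displays gives the desired inequality. Since each step is a direct application of a defining property, there is no genuine obstacle to overcome; the only point that needs a sentence of justification is the monotonicity of anti-norms, which, as noted, is an immediate consequence of superadditivity and the fact that an anti-norm takes values in $[0,\infty)$.
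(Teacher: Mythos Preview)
Your argument is correct and is exactly the intended one: the paper proves Corollary~\ref{C-3.11} simply by the remark ``Corollary~\ref{C-2.3} implies,'' and your proposal is the natural unpacking of that implication via monotonicity, superadditivity, and unitary invariance of $\|\cdot\|_!$.
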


\vskip 5pt
In case of the trace norm, this is the convex function version of Rotfel'd inequality
\eqref{F-2.1}. The convexity requirement on $g(t)$ cannot be relaxed to a mere
superadditivity assumption; indeed take for $s,t>0$,
$$
A=\frac{1}{2}
\begin{bmatrix} s&\sqrt{st} \\
\sqrt{st}&t
\end{bmatrix},
\qquad B=\frac{1}{2}
\begin{bmatrix} s&-\sqrt{st} \\
-\sqrt{st}&t
\end{bmatrix},
$$
and observe that the trace inequality $\|g(A+B) \|_1\ge \| g(A)\|_1+ \|g(B)\|_1$ combined
with $g(0)=0$ means that $g(t)$ is convex. For the functional of Example \ref{E-3.5}, we
recapture \eqref{F-2.5} and, with $g(t)=t$, Minkowski's inequality. 
 
Symmetric anti-norms behave well for positive linear maps between matrix spaces. The next
two corollaries follow from Theorem \ref{T-2.4}. Recall that $\bM_n\{\Omega\}$ stands for
the set of Hermitian operators with spectra in an interval $\Omega$. From \eqref{F-2.6} we
have:

\vskip 5pt
\begin{cor}\label{C-3.12}
Let $f:\Omega\to [0,\infty)$ be concave and let $A,\,B\in\bM_n\{\Omega\}$. Then, for all
symmetric anti-norms,
$$
\left\|f\left(\frac{A+B}{2}\right)\right\|_! \ge \left\| \frac{f(A)+(B)}{2}\right\|_!
\ge\frac{\|f(A)\|_!+\|f(B)\|_!}{2}.
$$
\end{cor}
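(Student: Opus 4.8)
The plan is to deduce the whole chain directly from the operator inequality \eqref{F-2.6} together with the three defining properties of a symmetric anti-norm (homogeneity, superadditivity, unitary invariance). Before touching the two displayed inequalities, I would first record one elementary consequence of the axioms that the proof really needs, namely that every symmetric anti-norm is \emph{monotone} for the positive semi-definite order: if $X\ge Y\ge 0$, then writing $X=(X-Y)+Y$ with $X-Y\in\bM_n^+$ and invoking superadditivity gives $\|X\|_!\ge\|X-Y\|_!+\|Y\|_!\ge\|Y\|_!$, since anti-norms take values in $[0,\infty)$. This monotonicity is what lets an operator inequality be turned into an anti-norm inequality.

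For the left-hand inequality I would start from \eqref{F-2.6}, which furnishes unitaries $U,V$ with
$$
f\Bigl(\frac{A+B}{2}\Bigr)\ge \frac12\Bigl\{U\,\frac{f(A)+f(B)}{2}\,U^*+V\,\frac{f(A)+f(B)}{2}\,V^*\Bigr\}.
$$
Applying monotonicity of $\|\cdot\|_!$ to this relation, then superadditivity to separate the two summands, then homogeneity to extract the factors $1/2$, and finally unitary invariance to discard $U$ and $V$, I arrive at
$$
\Bigl\|f\Bigl(\frac{A+B}{2}\Bigr)\Bigr\|_!\ge \frac12\Bigl\|\frac{f(A)+f(B)}{2}\Bigr\|_!+\frac12\Bigl\|\frac{f(A)+f(B)}{2}\Bigr\|_!=\Bigl\|\frac{f(A)+f(B)}{2}\Bigr\|_!,
$$
which is exactly the first asserted inequality.

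The right-hand inequality is then immediate from homogeneity and superadditivity alone: setting $C=\tfrac12(f(A)+f(B))$, one has $\|C\|_!=\tfrac12\|f(A)+f(B)\|_!\ge\tfrac12(\|f(A)\|_!+\|f(B)\|_!)$. There is no serious obstacle here; the only points that need genuine care are the domain checks that keep every operator inside $\bM_n^+$, where the anti-norm is defined. Since $f$ maps $\Omega$ into $[0,\infty)$, the operators $f(A)$, $f(B)$, and $\tfrac12(f(A)+f(B))$ are all positive, and since $\Omega$ is an interval the convex combination $\tfrac{A+B}{2}$ again lies in $\bM_n\{\Omega\}$, so $f\bigl(\tfrac{A+B}{2}\bigr)$ is well-defined and positive. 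These verifications are what legitimize the appeal to \eqref{F-2.6} and to monotonicity, and once they are in place the argument is purely formal.
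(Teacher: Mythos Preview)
Your argument is correct and follows exactly the route indicated in the paper, which simply says the corollary follows from \eqref{F-2.6}; you have merely spelled out the details (monotonicity of $\|\cdot\|_!$ from superadditivity, then unitary invariance and homogeneity) that the paper leaves to the reader.
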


\vskip 5pt
Given a contraction $Z\in \bM_n$, there exists some $K\in \bM_n$ such that $Z^*Z+K^*K=I$.
By using the unital positive map from $\bM_{2n}$ to $\bM_n$
$$
\begin{bmatrix}A &X \\ Y&B \end{bmatrix}\mapsto Z^*AZ +K^*BK,
$$
and letting $X=Y=B=0$, we infer from Theorem \ref{T-2.4}:

\vskip 5pt
\begin{cor}\label{C-3.13}
Let $f:\Omega\to [0,\infty)$ be concave, $0\in\Omega$, let $A\in\bM_n\{\Omega\}$ and let
$Z\in\bM_n$ be a contraction. Then, for all symmetric anti-norms,
\begin{equation}\label{F-3.2}
\| f(Z^*AZ) \|_! \ge \| Z^*f(A)Z\|_!.
\end{equation}
\end{cor}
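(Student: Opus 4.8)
The plan is to realize $Z^*f(A)Z$ (up to a harmless positive correction) as the image of $f(A)$ under the unital positive map already displayed, and then feed this into Theorem \ref{T-2.4}. Write $\Phi:\bM_{2n}\to\bM_n$ for the map $\begin{bmatrix} C & X \\ Y & D\end{bmatrix}\mapsto Z^*CZ+K^*DK$, where $K$ satisfies $Z^*Z+K^*K=I$. Since $\Phi$ sends $\diag(I,I)$ to $Z^*Z+K^*K=I$, it is unital and positive. First I would apply $\Phi$ to the block-diagonal matrix $\widetilde A:=\diag(A,0)\in\bM_{2n}$. Because $0\in\Omega$ and $A\in\bM_n\{\Omega\}$, the spectrum of $\widetilde A$ is that of $A$ together with $\{0\}$, so $\widetilde A\in\bM_{2n}\{\Omega\}$ and Theorem \ref{T-2.4} applies to $\Phi$, $f$, and $\widetilde A$.

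Next I would compute the two relevant images. One checks directly that $\Phi(\widetilde A)=Z^*AZ$, while functional calculus on the block-diagonal $\widetilde A$ gives $f(\widetilde A)=\diag(f(A),f(0)I)$, hence $\Phi(f(\widetilde A))=Z^*f(A)Z+f(0)K^*K$. Theorem \ref{T-2.4} then furnishes unitaries $U,V\in\bM_n$ with
$$
f(Z^*AZ)\ge \frac{U\bigl(Z^*f(A)Z+f(0)K^*K\bigr)U^*+V\bigl(Z^*f(A)Z+f(0)K^*K\bigr)V^*}{2}.
$$

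It remains to pass from this operator inequality to the claimed anti-norm inequality. Here I would invoke three features of any symmetric anti-norm: monotonicity (which follows from superadditivity, since $\|Y\|_!\ge\|X\|_!+\|Y-X\|_!\ge\|X\|_!$ whenever $0\le X\le Y$), together with homogeneity and unitary invariance. Setting $W:=Z^*f(A)Z+f(0)K^*K\ge0$, applying monotonicity to the operator inequality, then using homogeneity and superadditivity to split the average of the two unitary conjugates, and finally unitary invariance, I would obtain $\|f(Z^*AZ)\|_!\ge\tfrac12(\|UWU^*\|_!+\|VWV^*\|_!)=\|W\|_!$.

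The last step, and the only place where a little care is needed, is discarding the correction term. Since $f$ takes values in $[0,\infty)$ we have $f(0)\ge0$, so $f(0)K^*K\ge0$, and one more application of monotonicity gives $\|W\|_!=\|Z^*f(A)Z+f(0)K^*K\|_!\ge\|Z^*f(A)Z\|_!$, which closes the argument. I expect the main obstacle to be purely organizational: confirming that monotonicity of anti-norms is genuinely available from superadditivity, and noting that the extra summand $f(0)K^*K$ pushes in the favorable direction, so that $f(0)=0$ need \emph{not} be assumed here, in contrast with the convex-function situation of Corollary \ref{C-3.11}.
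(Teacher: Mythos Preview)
Your proof is correct and follows exactly the route the paper indicates: apply Theorem \ref{T-2.4} to the unital positive map $\Phi(\diag(C,D))=Z^*CZ+K^*DK$ at the block-diagonal $\widetilde A=\diag(A,0)$, then use monotonicity, concavity, and unitary invariance of the anti-norm. The paper states the idea tersely and leaves the passage from the operator inequality to the anti-norm inequality implicit; your handling of the extra term $f(0)K^*K\ge0$ via monotonicity is exactly what is needed to complete that passage.
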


\vskip 5pt
It is a matrix version of the obvious scalar inequality $f(za)\le zf(a)$ for $z\in[0,1]$,
$a\in\Omega$. In case of the trace norm, it  was noticed by Brown and Kosaki \cite{BK}.
If $f(t)$ is non-negative, concave and {\it monotone} on $\Omega$, Theorem \ref{T-2.4}
shows that \eqref{F-3.2} holds for symmetric norms too. If $f(t)$ is non-negative and
{\it operator} concave on $\Omega$, then $f(Z^*AZ) \ge Z^*f(A)Z$ so that once again
\eqref{F-3.2} holds for symmetric norms too. However, it would be surprising if
\eqref{F-3.2} were true in general for symmetric norms; thus an explicit counterexample
would be desirable.

If $\Omega=[0,\infty)$ and if $Z$ is no longer a contraction, but, in an opposite way, an
expansive operator (i.e., its inverse is a contraction), then one might expect that a
reverse inequality
\begin{equation}\label{F-3.3}
\| f(Z^*AZ) \|_! \le \| Z^*f(A)Z\|_!
\end{equation}
holds. This is not true, as shown by simple counterexamples, for the anti-norms
of Example \ref{E-3.3}, except for the trace norm. In fact the symmetric norm version of
\eqref{F-3.3} holds, see \cite{BL} and references therein. Nevertheless \eqref{F-3.3}
might be true for Schatten $q$-anti-norms, $q\in(0,1)$.

Like symmetric norms, symmetric anti-norms are functions of the singular values as stated
in Proposition \ref{P-3.2}. However, the notion of symmetric anti-norms is more flexible
than that of symmetric norms as it is illustrated by Proposition \ref{P-3.9} and the
sample of previous examples. Thus, one cannot expect for symmetric anti-norms a set of
inequalities as rich as in the symmetric norm case. For instance, \eqref{F-3.3} collapses
for general anti-norms. The next section is successful in giving a few anti-norm estimates.

\section{subadditivity and superadditivity}

The class of convex and subadditive functions $s:[0,\infty)\to [0,\infty)$ is small;
such a function is a sum $s(t)=at+b(t)$ for some  $a\ge0$ and some non-increasing convex
function $b(t)\ge0$. Hence, the inequality for positive  $A,\, B,$
\begin{equation*}
{\mathrm{Tr\,}}s(A+B)  \le {\mathrm{Tr\,}}s(A) + {\mathrm{Tr\,}}s(B)
\end{equation*}
is trivial. Most of convex functions $g:[0,\infty)\to [0,\infty)$ are far from being
subadditive; if $g(0)=0$, they are automatically superadditive. To obtain subadditivity
results, we will assume that composing $g(t)$ with $t\mapsto t^q$ for some $q\in(0,1)$ 
yields a subadditive function. A parallel approach yields superadditivity results for
anti-norms.

\vskip 10pt 
\begin{theorem}\label{T-4.1}
Let $g,f:[0,\infty)\to [0,\infty)$, $0<q<1<p$, and let  $A,\,B\in\bM_n^+$. 
\begin{itemize}
\item[(1)] If $g(t)$ is convex and $g^q(t)$ is subadditive, then for all symmetric norms,
\begin{equation}\label{F-4.1}
\| g(A+B) \|^{q} \le \|g(A) \|^{q} + \| g(B) \|^{q}.
\end{equation} 

\item[(2)] If  $f(t)$ is concave and $f^p(t)$ is superadditive, then for all symmetric
anti-norms,
\begin{equation}\label{F-4.2}
\| f(A+B) \|_!^{p} \ge \|f(A) \|_!^{p} + \| f(B) \|_!^{p}.
\end{equation}
\end{itemize}
\end{theorem}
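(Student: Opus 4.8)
The plan is to pass to eigenvalue vectors and to split each assertion into a majorization reduction followed by a Minkowski-type inequality at the level of gauge functions. Throughout write $\Phi$ (resp. $\Phi_!$) for the symmetric gauge of the norm $\|\cdot\|$ (resp. anti-norm $\|\cdot\|_!$), so that $\|C\|=\Phi(\lambda(C))$ and $\|C\|_!=\Phi_!(\lambda(C))$ for $C\in\bM_n^+$. Recall that $\Phi$ is monotone and Schur-convex, while $\Phi_!$ is monotone (a consequence of superadditivity and non-negativity) and Schur-concave (being symmetric and concave, cf. Proposition \ref{P-3.2}). Put $\beta=\lambda(A)$, $\gamma=\lambda(B)$ and $\delta=\beta+\gamma$, a decreasing vector, and note that $g(A)$ has eigenvalues $g(\beta_i)$, and similarly for the other functions.

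First step: reduction to the vector $\delta$. By the Ky Fan majorization $\lambda(A+B)\prec\delta$ and Birkhoff's theorem there is a doubly stochastic $D$ with $\lambda(A+B)=D\delta$. For (1), convexity of $g$ gives $g(\lambda_i(A+B))=g\bigl(\sum_jD_{ij}\delta_j\bigr)\le\sum_jD_{ij}g(\delta_j)$, i.e. $g(\lambda(A+B))\le Dg(\delta)$ entrywise, and since $Dg(\delta)\prec g(\delta)$, monotonicity and Schur-convexity of $\Phi$ yield
\[
\|g(A+B)\|=\Phi(g(\lambda(A+B)))\le\Phi(Dg(\delta))\le\Phi\bigl((g(\beta_i+\gamma_i))_i\bigr).
\]
For (2), concavity of $f$ gives the reverse entrywise bound $f(\lambda(A+B))\ge Df(\delta)$ with $Df(\delta)\prec f(\delta)$, so monotonicity and Schur-concavity of $\Phi_!$ give $\|f(A+B)\|_!\ge\Phi_!\bigl((f(\beta_i+\gamma_i))_i\bigr)$. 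Note that no monotonicity hypothesis on $g$ or $f$ is needed here.

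Second step: the scalar hypotheses. Applied coordinatewise they read $g(\beta_i+\gamma_i)\le(g(\beta_i)^q+g(\gamma_i)^q)^{1/q}$ and $f(\beta_i+\gamma_i)\ge(f(\beta_i)^p+f(\gamma_i)^p)^{1/p}$. Inserting these into the monotone gauges and writing $x=\lambda(g(A))$, $y=\lambda(g(B))$ (resp. $x=\lambda(f(A))$, $y=\lambda(f(B))$), so that $\Phi(x)=\|g(A)\|$, etc., reduces the theorem to the two purely vectorial inequalities
\[
\Phi\bigl(((x_i^q+y_i^q)^{1/q})_i\bigr)\le(\Phi(x)^q+\Phi(y)^q)^{1/q},\qquad
\Phi_!\bigl(((x_i^p+y_i^p)^{1/p})_i\bigr)\ge(\Phi_!(x)^p+\Phi_!(y)^p)^{1/p}.
\]

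Third step, which I expect to be the main point: the gauge Minkowski inequalities. The left one is equivalent, after the substitution $u_i=x_i^q$, $v_i=y_i^q$, $r=1/q>1$, to the subadditivity of $w\mapsto\Phi((w_i^r)_i)^{1/r}$. This follows from convexity of $t\mapsto t^r$ exactly as in the proof of Proposition \ref{P-3.7}: normalizing so that $\Phi((u^r))^{1/r}=s$ and $\Phi((v^r))^{1/r}=t$ with $s+t=1$, one has $(u_i+v_i)^r\le s^{1-r}u_i^r+t^{1-r}v_i^r$, whence $\Phi(((u+v)^r)_i)\le s^{1-r}\Phi((u^r))+t^{1-r}\Phi((v^r))=s+t=1$. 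The right inequality is the anti-norm analogue and is precisely the vector form of Proposition \ref{P-3.7} with exponent $1/p\in(0,1)$: the map $w\mapsto\Phi_!((w_i^{1/p})_i)^{p}$ is again a symmetric anti-gauge, hence superadditive. Combining the three steps proves (1) and (2). The delicate points are the Birkhoff/Schur bookkeeping of the first step (which is what removes any monotonicity assumption on $g,f$) and the gauge-level Minkowski lemmas of the third step; once these are secured, the scalar hypotheses on $g^q$ and $f^p$ enter only coordinatewise.
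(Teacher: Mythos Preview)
Your proof is correct and follows essentially the same route as the paper's: the Ky Fan majorization combined with convexity/concavity of $g,f$ to reduce to the commuting pair $A^\downarrow,B^\downarrow$, then the scalar hypothesis on $g^q$ or $f^p$ applied coordinatewise, and finally the Minkowski-type fact that $X\mapsto\|X^{1/q}\|^q$ is a norm (resp.\ Proposition~\ref{P-3.7} for the anti-norm case); you phrase this at the level of gauge functions and Schur-convexity/concavity, while the paper uses the majorization symbols $\prec_w$, $\prec^w$ and Lemma~\ref{L-4.2}, but the two arguments coincide. One small notational slip: in your Step~2 you should set $x_i=g(\beta_i)$, $y_i=g(\gamma_i)$ (unsorted) rather than $x=\lambda(g(A))$, since the pairing with the coordinates of $\delta=\beta+\gamma$ is what Step~1 actually produces; this is harmless because your Step-3 inequality holds for arbitrary $x,y\in\bR_+^n$ and $\Phi$ is symmetric.
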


\vskip 10pt
Note that \eqref{F-4.1} generalizes the fact that $X\mapsto \Vert \,|X|^{1/q} \|^q$ is a
norm on $\bM_n$. Indeed, standard majorizations extend \eqref{F-4.1} to $\bM_n$ as follows:
{\it Let $g:[0,\infty)\to [0,\infty)$ be convex and increasing, and let $q\in(0,1)$. If
$g^q(t)$ is subadditive, then the map $X \mapsto \left\| g(|X|)\right\|^q$ is subadditive
on $\bM_n$.}
Similarly, \eqref{F-4.2} is an extension of Proposition \ref{P-3.7}. 

To prove \eqref{F-4.2} we will need a Ky Fan principle for anti-norms. Recall that the
Ky Fan principle for symmetric norms on $\bM_n^+$ states that $\|A\|\le \| B\|$ for all
symmetric norms if and only if the eigenvalues of $A$ are weakly majorized by those of $B$,
that is, $\|A\|_{(k)}\le\|B\|_{(k)}$ for the Ky Fan $k$-norms, $1\le k\le n$. We express
it by writing $A\prec_w B$. By using the notation $A\prec^w B$ we mean that
$\| A\|_{[k]}\ge \| B\|_{[k]}$ for every anti-norm of Example \ref{E-3.3}.

\begin{lemma}\label{L-4.2}
Let $A,\,B\in\bM_n^+$. If $A\prec^{w}B$, then $\| A\|_!\ge \|B\|_!$ for all symmetric
anti-norms.
\end{lemma}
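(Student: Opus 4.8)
The plan is to pass from matrices to their eigenvalue vectors via Proposition~\ref{P-3.2} and to prove the corresponding statement for the associated permutation-invariant function on $\bR_+^n$. Write $\|A\|_!=\Phi_!(\lambda(A))$ and $\|B\|_!=\Phi_!(\lambda(B))$, where $\Phi_!$ is homogeneous, concave and invariant under coordinate permutations. The hypothesis $A\prec^w B$ says precisely that $\sum_{j=n-k+1}^n\lambda_j(A)\ge\sum_{j=n-k+1}^n\lambda_j(B)$ for every $k$, i.e.\ the eigenvalue vectors satisfy the weak supermajorization $\lambda(A)\prec^w\lambda(B)$. Thus it suffices to show that $x\prec^w y$ implies $\Phi_!(x)\ge\Phi_!(y)$ for $x,y\in\bR_+^n$. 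A first remark, used repeatedly, is that $\Phi_!$ is nondecreasing on $\bR_+^n$: superadditivity together with $\Phi_!\ge0$ gives $\Phi_!(u+w)\ge\Phi_!(u)+\Phi_!(w)\ge\Phi_!(u)$ for all $u,w\ge0$, so increasing a coordinate cannot decrease $\Phi_!$.

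The key structural input is the classical description of weak supermajorization (see \cite{Bh}, \cite{H1}): $x\prec^w y$ holds if and only if there exists $v$ with $v\prec y$ (ordinary majorization) and $x\ge v$ coordinatewise. Applying Birkhoff's theorem to $v\prec y$ furnishes a doubly stochastic matrix $S$ with $v=Sy$, whence $x\ge Sy$. Since $S$ has nonnegative entries and $y\ge0$, the vector $Sy$ again lies in $\bR_+^n$, so every quantity below stays in the domain of $\Phi_!$.

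It then remains to chain two inequalities. By monotonicity and $x\ge Sy\ge0$ we obtain $\Phi_!(x)\ge\Phi_!(Sy)$. Writing $S=\sum_i t_iP_i$ as a convex combination of permutation matrices, concavity followed by permutation invariance gives $\Phi_!(Sy)=\Phi_!\bigl(\sum_i t_iP_iy\bigr)\ge\sum_i t_i\Phi_!(P_iy)=\Phi_!(y)$. Combining the two yields $\Phi_!(x)\ge\Phi_!(y)$, as wanted. I expect the only delicate point to be the structural input just described: selecting the correct matrix form of weak supermajorization with the inequality in the right direction ($x\ge Sy$, not $x\le Sy$) and verifying that the intermediate vector $Sy$ remains nonnegative, since $\Phi_!$ and its monotonicity are only guaranteed on $\bR_+^n$. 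Once that is secured, the concavity-plus-Birkhoff step is routine.
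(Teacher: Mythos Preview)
Your proof is correct and follows essentially the same strategy as the paper's: reduce the weak supermajorization to an ordinary majorization plus a monotone shift, then invoke a Birkhoff-type decomposition together with the concavity, invariance, and monotonicity of the anti-norm. The only cosmetic differences are that the paper works directly at the matrix level (unitary conjugates rather than permutations, avoiding Proposition~\ref{P-3.2}) and builds the intermediate object on the other side---adding $r\ge0$ to $\lambda_1(B)$ to obtain a diagonal $C$ with $A\prec C$ and $C\ge B^\downarrow$---rather than citing the general characterization $x\ge Sy$.
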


\vskip 5pt
\begin{proof}
Denote by $\lambda_1(A)\ge\lambda_2(A)\ge\cdots\ge\lambda_n(A)$ the eigenvalues of $A$
arranged in decreasing order. By assumption,
\begin{align*}
\lambda_n(A) &\ge \lambda_n(B), \\
\lambda_n(A) + \lambda_{n-1}(A)  &\ge \lambda_n(B) + \lambda_{n-1}(B),  \\
&\ \,\vdots \\
\lambda_n(A) + \lambda_{n-1}(A)+\cdots +\lambda_1(A)
&\ge \lambda_n(B) + \lambda_{n-1}(B)  +\cdots +\lambda_1(B).  \\
\end{align*}
Hence, replacing $\lambda_1(B)$ with $\lambda_1(B)+r$ for some $r\ge 0$, we may obtain a
majorization
$$
A\prec C
$$
where $C$ is the diagonal matrix
$C=\diag(\lambda_1(B)+r, \lambda_2(B),\cdots,\lambda_n(B))$. We then have some unitaries
$\{U_i\}_{i=1}^m$ and some non-negative scalars $\{\alpha_i\}_{i=1}^m$ of sum $1$ (we may
take $m=n$, see \cite{Zh})  such that
$$
A=\sum_{i=1}^m \alpha_i U_iCU_i^*
$$
so that by concavity and unitary invariance of anti-norms,
$
\| A\|_! \ge \| C \|_! \ge \| B \|_! 
$.
\end{proof}

We turn to the proof of the theorem.

\vskip 5pt
\noindent{\it Proof of Theorem \ref{T-4.1}.\enspace}
Let $A^{\downarrow}$ denote the diagonal matrix listing the eigenvalues of $A$ arranged
in decreasing order.

(1)\enspace From the Ky-Fan majorization
\begin{equation}\label{F-4.3}
A+B\prec A^{\downarrow}+ B^{\downarrow}
\end{equation}
and the convexity of $g(t)$ we infer the weak majorization
$$
g(A+B)\prec_w g(A^{\downarrow}+ B^{\downarrow}),
$$
that is,
$$
g(A+B)\prec_w (g^q(A^{\downarrow}+ B^{\downarrow}))^{1/q}.
$$
By the subadditivity assumption
$g^q(A^{\downarrow}+ B^{\downarrow})\le g^q(A^{\downarrow})+ g^q(B^{\downarrow})$ combined
with the previous weak-majorization, we obtain
$$
g(A+B)\prec_w \{g^q(A^{\downarrow})+ g^q(B^{\downarrow})\}^{1/q}.
$$
Thus
$$
\| g(A+B)\| \le \| \{g^q(A^{\downarrow})+ g^q(B^{\downarrow})\}^{1/q}\|
$$
so that
\begin{align*}
\| g(A+B)\|^q &\le \|   \{g^q(A^{\downarrow})+ g^q(B^{\downarrow})\}^{1/q}\|^q \\
& \le \|g(A) \|^{q} + \| g(B) \|^{q},
\end{align*}
where the last step follows from the well-known fact that $X\mapsto \| X^{1/q}\, \|^q$ is
concave on $\bM_n^+$ (this may also be proved in a similar way to Proposition \ref{P-3.7}).

(2)\enspace From the majorization \eqref{F-4.3} and the concavity of $f(t)$ we infer the
super-majorization 
$$
f(A+B) \prec^w f(A^{\downarrow}+ B^{\downarrow}),
$$
that is,
$$
f(A+B) \prec^w \{f^p(A^{\downarrow}+ B^{\downarrow})\}^{1/p}.
$$
Combine this with the superadditivity assumption
$f^p(A^{\downarrow}+B^{\downarrow})\ge f^p(A^{\downarrow})+ f^p(B^{\downarrow})$ to obtain
$$
f(A+B) \prec^w \{f^p(A^{\downarrow})+ f^p(B^{\downarrow})\}^{1/p}.
$$
Thus, by Lemma \ref{L-4.2},
$$
\| f(A+B) \|_! \ge \|  \{f^p(A^{\downarrow})+ f^p(B^{\downarrow})\}^{1/p} \|_!,
$$
and applying Proposition \ref{P-3.7} yields the result.
\qed

\vskip 5pt
The most important special case of \eqref{F-4.1} is:

\vskip 10pt
\begin{cor}\label{C-4.3}
Let $g(t)=\sum_{k=0}^m a_kt^k$ be a polynomial of degree $m$ with all non-negative
coefficients. Then, for all positive operators $A,\,B$ and all symmetric norms,
\begin{equation*}
\| g(A+B) \|^{1/m} \le \|g(A) \|^{1/m} + \| g(B) \|^{1/m}.
\end{equation*} 
\end{cor}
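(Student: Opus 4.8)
The plan is to derive Corollary~\ref{C-4.3} as a direct application of part~(1) of Theorem~\ref{T-4.1}. The theorem states that if $g(t)$ is convex and $g^q(t)$ is subadditive for some $q\in(0,1)$, then $\|g(A+B)\|^q\le\|g(A)\|^q+\|g(B)\|^q$. Since the desired conclusion is $\|g(A+B)\|^{1/m}\le\|g(A)\|^{1/m}+\|g(B)\|^{1/m}$, the natural choice is $q=1/m$, which lies in $(0,1)$ as soon as $m\ge 2$ (the case $m=1$ being the triangle inequality for the norm). So the entire task reduces to verifying the two hypotheses of Theorem~\ref{T-4.1}(1) for this $g$ and this $q$, namely that $g(t)=\sum_{k=0}^m a_k t^k$ is convex on $[0,\infty)$ and that $g^{1/m}(t)$ is subadditive on $[0,\infty)$.

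Convexity is immediate: each monomial $a_k t^k$ with $a_k\ge 0$ and $k\ge 0$ is convex on $[0,\infty)$, and a sum of convex functions is convex. So the only substantive point is the subadditivity of $h(t):=g(t)^{1/m}=\bigl(\sum_{k=0}^m a_k t^k\bigr)^{1/m}$. The plan here is to prove $h(s+t)\le h(s)+h(t)$ for all $s,t\ge 0$. I would argue this via homogeneity-type scaling together with the elementary inequality relating the $\ell^p$ and $\ell^1$ norms of coefficient-like quantities. Concretely, the key structural fact is that $t\mapsto t^m$ dominates each lower-degree term after rescaling: writing $h(t)^m=\sum_{k=0}^m a_k t^k$, one wants to compare $h(s+t)^m=\sum_k a_k(s+t)^k$ against $(h(s)+h(t))^m$. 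Expanding $(s+t)^k$ by the binomial theorem and expanding $(h(s)+h(t))^m$ by the binomial theorem as well, I would match terms and use that the "mass" of the degree-$m$ root spreads subadditively; the cleanest route is to invoke Minkowski's inequality in the sequence space $\ell^{m}$ after writing $a_k t^k=(a_k^{1/m}\,t^{k/m})^m$ and treating the vector $\bigl(a_k^{1/m}t^{k/m}\bigr)_{k=0}^m$.

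More precisely, I would observe that $h(t)=\bigl\|\,(a_k^{1/m}t^{k/m})_{k=0}^m\,\bigr\|_{\ell^m}$, the $\ell^m$-norm of the vector with entries $a_k^{1/m}t^{k/m}$. Then $h(s+t)=\bigl\|\,(a_k^{1/m}(s+t)^{k/m})_k\,\bigr\|_{\ell^m}$, and since each $u\mapsto u^{k/m}$ is subadditive on $[0,\infty)$ (because $k/m\le 1$), we have $(s+t)^{k/m}\le s^{k/m}+t^{k/m}$ coordinatewise. Monotonicity of the $\ell^m$-norm in nonnegative entries then gives
\[
h(s+t)\le \bigl\|\,(a_k^{1/m}s^{k/m}+a_k^{1/m}t^{k/m})_k\,\bigr\|_{\ell^m}
\le \bigl\|\,(a_k^{1/m}s^{k/m})_k\,\bigr\|_{\ell^m}+\bigl\|\,(a_k^{1/m}t^{k/m})_k\,\bigr\|_{\ell^m}=h(s)+h(t),
\]
where the middle step is Minkowski's inequality for $\ell^m$ (valid since $m\ge 1$). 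This establishes subadditivity of $g^{1/m}$, and the corollary follows by applying Theorem~\ref{T-4.1}(1) with $q=1/m$.

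\textbf{The main obstacle} I anticipate is not any single hard estimate but rather getting the two scalar subadditivity reductions to chain together correctly: one must be careful that the coordinatewise bound $(s+t)^{k/m}\le s^{k/m}+t^{k/m}$ feeds into a \emph{monotone} norm, and that Minkowski is being applied in the right exponent ($\ell^m$, not $\ell^{1/m}$). The exponent $1/m\in(0,1)$ is what makes the anti-norm theorem the wrong tool and the ordinary norm theorem the right one; conversely the exponent $m\ge 1$ is exactly what legitimizes Minkowski. Keeping these two roles of $m$ straight is the delicate bookkeeping point, but once the $\ell^m$-norm reformulation of $g^{1/m}$ is in hand the argument is routine.
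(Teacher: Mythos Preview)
Your proposal is correct and follows the same overall strategy as the paper: reduce to Theorem~\ref{T-4.1}(1) with $q=1/m$, note that convexity of $g$ is immediate, and then verify the scalar subadditivity of $g^{1/m}$ on $[0,\infty)$. The only genuine difference is in how that last scalar step is carried out.

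The paper argues via \emph{quasi-concavity}: since $u^k\ge u^m$ for $u\in(0,1)$ and $0\le k\le m$, one has $g(ut)\ge u^m g(t)$, hence $g^{1/m}(ut)\ge u\,g^{1/m}(t)$, so $g^{1/m}(t)/t$ is non-increasing; this immediately yields $g^{1/m}(a)\ge \frac{a}{a+b}\,g^{1/m}(a+b)$ and the symmetric inequality, and summing gives subadditivity. Your route instead rewrites $g^{1/m}(t)$ as the $\ell^m$-norm of the vector $(a_k^{1/m}t^{k/m})_k$, uses the coordinatewise subadditivity $(s+t)^{k/m}\le s^{k/m}+t^{k/m}$ (valid since $k/m\le 1$), and finishes with monotonicity and Minkowski in $\ell^m$. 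Both arguments are short and fully rigorous. The paper's version has the slight advantage that the quasi-concavity viewpoint is reused immediately afterwards to handle further examples such as $g(t)=t+(t-1)_+$; your version is perhaps more transparently structural. One very minor remark: for $m=1$ the conclusion is not literally ``the triangle inequality'' when $a_0>0$, since $a_0 I$ appears once on the left and twice on the right; one also needs the monotonicity of symmetric norms on $\bM_n^+$, which is of course trivial.
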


\vskip 10pt
\begin{proof}
It suffices to show that if $g(t)=\sum_{k=0}^m a_kt^k$ is a polynomial of degree $m$ with
non-negative coefficients, then $g^{1/m}(t)$ is subadditive (on the positive half-line).
To this end, note that for $u\in(0,1)$, we have $g^{1/m}(ut) \ge ug^{1/m}(t)$ so that
$t\mapsto g^{1/m}(t)/t$ is non-increasing (this decreasing property of $w(t)/t$ is called
{\it quasi-concavity} for $w(t)$). Thus for all positive reals $a,\,b$,
$$
g^{1/m}(a) \ge \frac{a}{a+b} g^{1/m}(a+b)
\qquad{\mathrm{and}}\qquad g^{1/m}(b) \ge \frac{b}{a+b} g^{1/m}(a+b)
$$
so that
$$
g^{1/m}(a) +g^{1/m}(b) \ge g^{1/m}(a+b).
$$
\end{proof}

\vskip 10pt
The assumptions of  Corollary \ref{C-4.3} do not ensure that $A\mapsto \| g^{1/m}(A)\|$ is
subadditive on $\bM_n^+$. Indeed, for a non-negative function $h(t)$ vanishing at $0$,
the subadditivity of  $A\mapsto \| h(A)\|_1$, the trace norm, implies that $h(t)$ is
convex. A lot of polynomial $g(t)$ in Corollary \ref{C-4.3} may satisfy $g(0)=0$ and
$g^{1/m}(t)$ is not convex; for instance, if $m=3$ and $g(t)=t+t^3$.

Another example  satisfying the assumptions of Theorem \ref{T-4.1}\,(1), with $q=1/2$, is
$$
g(t)=t+(t-1)_+,
$$
where $(t-1)_+ := \max\{0,t-1\}$. Again, this follows from the fact that $\sqrt{g(t)}$ is
quasi-concave. 

Let $q\in(0,1)$. If $g_1,g_2:[0,\infty)\to [0,\infty)$ are convex functions such that
$g_1^q(t)$ and $g_2^q(t)$ are quasi-concave, then $g_1+g_2$ shares the same properties.
This provides a sub-class, closed under sum, of the class of functions satisfying the
assumptions of Theorem \ref{T-4.1}\,(1).

Thus, there are several examples of functions $g(t)$ for which Theorem \ref{T-4.1}\,(1) and
its corollaries below are available.

Now we turn to some applications of \eqref{F-4.1} to block-matrices.

\vskip 10pt 
\begin{cor}\label{C-4.4}
Let $g:[0,\infty)\to [0,\infty)$ be convex, $g(0)=0$, and let $ q\in(0,1)$.  
If $g^q(t)$ is subadditive, then
\begin{equation*}
\left\| \, g\left(\begin{bmatrix} A &X \\
X^* &B\end{bmatrix}\right) \right\| \, \le
\left( \left\|  g(A)\right\|^q + \left\|  g(B) \right\|^q \right)^{1/q}
\end{equation*}
for all partitioned positive operators and all symmetric norms.
\end{cor}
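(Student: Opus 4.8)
The plan is to reduce the statement to Theorem \ref{T-4.1}(1) by writing the positive block-matrix $M=\begin{bmatrix} A &X \\ X^* &B\end{bmatrix}$ as a sum of two terms lying in the unitary orbits of $\begin{bmatrix} A &0 \\ 0 &0\end{bmatrix}$ and $\begin{bmatrix} 0 &0 \\ 0 &B\end{bmatrix}$, that is, to first establish the decomposition \eqref{F-2.3}. Once \eqref{F-2.3} is available, the rest is immediate: applying \eqref{F-4.1} to the two summands gives
\begin{equation*}
\|g(M)\|^q \le \left\| g\left(U\begin{bmatrix} A &0 \\ 0 &0\end{bmatrix}U^*\right)\right\|^q + \left\| g\left(V\begin{bmatrix} 0 &0 \\ 0 &B\end{bmatrix}V^*\right)\right\|^q ,
\end{equation*}
and since $g(UTU^*)=Ug(T)U^*$ and $g(0)=0$, the functional calculus turns the first summand into $U\begin{bmatrix} g(A) &0 \\ 0 &0\end{bmatrix}U^*$, whose norm equals $\|g(A)\|$ by unitary invariance together with the fact that adjoining a zero block leaves the singular-value list (hence every symmetric norm) unchanged; likewise for $B$. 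This yields $\|g(M)\|^q\le\|g(A)\|^q+\|g(B)\|^q$, which is the claim. It is precisely here that the hypothesis $g(0)=0$ is used, in order to annihilate the lower block.

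To establish \eqref{F-2.3}, I would set $C:=M^{1/2}$, which is Hermitian, and split it into row blocks $C=\begin{bmatrix} R_1 \\ R_2\end{bmatrix}$ with $R_1\in\bM_{n\times(n+m)}$ and $R_2\in\bM_{m\times(n+m)}$. Since $M=C^2=C^*C$, expanding the product gives $M=R_1^*R_1+R_2^*R_2$, while reading off the diagonal blocks of $M=CC^*$ shows $A=R_1R_1^*$ and $B=R_2R_2^*$. The key observation is then the standard unitary equivalence between $X^*X$ and $XX^*$: padding $R_1$ to the square matrix $\widetilde R_1=\begin{bmatrix} R_1 \\ 0\end{bmatrix}\in\bM_{n+m}$ gives $\widetilde R_1^*\widetilde R_1=R_1^*R_1$ and $\widetilde R_1\widetilde R_1^*=\begin{bmatrix} A &0 \\ 0 &0\end{bmatrix}$, so $R_1^*R_1=U\begin{bmatrix} A &0 \\ 0 &0\end{bmatrix}U^*$ for some unitary $U\in\bM_{n+m}$; the same argument applied to $R_2$ yields $R_2^*R_2=V\begin{bmatrix} 0 &0 \\ 0 &B\end{bmatrix}V^*$. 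Substituting into $M=R_1^*R_1+R_2^*R_2$ produces \eqref{F-2.3}.

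The technical crux is this decomposition step, and within it the choice $C=M^{1/2}$: taking $C$ Hermitian is exactly what forces $C^*C=CC^*=M$, so that the row-block products $R_i^*R_i$ sum back to $M$ itself rather than to a merely unitarily-equivalent matrix (which is what one would get from an arbitrary square root and would require an extra polar-decomposition adjustment). With that choice the two pieces are automatically compatible with $M$, and the $X^*X\sim XX^*$ trick delivers the unitary orbits for free. I expect no further obstacle, since the remaining ingredients---functional calculus with $g(0)=0$, unitary invariance, and the invariance of symmetric norms under zero-padding---are all routine.
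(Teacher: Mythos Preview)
Your proof is correct and follows essentially the same route as the paper: the paper also takes the positive square root of the block matrix, writes it in block form $\begin{bmatrix} C & Y \\ Y^* & D\end{bmatrix}$, and decomposes $M=T^*T+S^*S$ with $T=\begin{bmatrix} C & Y \\ 0 & 0\end{bmatrix}$ and $S=\begin{bmatrix} 0 & 0 \\ Y^* & D\end{bmatrix}$ (exactly your $\widetilde R_1$ and the analogous padding of $R_2$), then invokes the unitary equivalence $T^*T\sim TT^*$ and applies \eqref{F-4.1}. The only difference is notational: you use row-block labels $R_1,R_2$ where the paper spells out the entries of the square root.
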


\vskip 5pt
\begin{proof}
The corollary is a straightforward consequence of the combination of \eqref{F-4.1} with the
decomposition \eqref{F-2.3} in $\bM_{n+m}^+$,
\begin{equation*}
\begin{bmatrix} A &X \\
X^* &B\end{bmatrix} = U
\begin{bmatrix} A &0 \\
0 &0\end{bmatrix} U^* +
V\begin{bmatrix} 0 &0 \\
0 &B\end{bmatrix} V^*
\end{equation*}
for some unitaries $U,\,V\in \bM_{n+m}$. To prove this, factorize positive matrices as a
square of positive matrices,
\begin{equation*}
\begin{bmatrix} A &X \\
X^* &B\end{bmatrix} =
\begin{bmatrix} C &Y \\
Y^* &D\end{bmatrix}
\begin{bmatrix} C &Y \\
Y^* &D\end{bmatrix}
\end{equation*}
and observe that it can be written as
\begin{equation*}
\begin{bmatrix} C &0 \\
Y^* &0\end{bmatrix}
\begin{bmatrix} C &Y \\
0 &0\end{bmatrix} +
\begin{bmatrix} 0 &Y \\
 0&D\end{bmatrix}
\begin{bmatrix} 0 &0 \\
Y^* &D\end{bmatrix} = T^*T + S^*S.
\end{equation*}
Then, use the fact that $T^*T$ and $S^*S$ are unitarily congruent to
$$
TT^*= \begin{bmatrix} A &0 \\
0 &0\end{bmatrix}
\quad \mathrm{and} 
\quad
SS^*=\begin{bmatrix} 0 &0 \\
0 &B\end{bmatrix}.
$$
\end{proof}

Of course, a version of this corollary holds for partitioned operators in $m^2$ blocks.
In particular, in case of the trace norm and an $m\times m$ matrix:

\vskip 10pt 
\begin{cor}\label{C-4.5}
Let $g:[0,\infty)\to [0,\infty)$ be convex, $g(0)=0$, and let $ q\in(0,1)$.  
If  $g^q(t)$  is subadditive, then
\begin{equation*}
{\mathrm{Tr\,}} g(A) \le \left( \sum_{i=1}^m g^q(a_{ii}) \right)^{1/q}
\end{equation*}
for all positive $m\times m$ matrix  $A=[a_{ij}]$.
\end{cor}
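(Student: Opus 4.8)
The plan is to obtain Corollary \ref{C-4.5} as the trace-norm instance of an $m$-block version of Corollary \ref{C-4.4}. I regard the positive matrix $A=[a_{ij}]$ as partitioned into $m^{2}$ blocks of size $1\times1$, so that its diagonal blocks are the scalars $a_{11},\dots,a_{mm}$. Two ingredients are needed: the $m$-term analogue of the decomposition \eqref{F-2.3}, and the iterated form of \eqref{F-4.1}.

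First I would produce the decomposition. Let $R=A^{1/2}=[R_{ij}]_{i,j=1}^{m}$ be the positive square root, so that $R$ is Hermitian ($R_{ij}^{*}=R_{ji}$) and $\sum_{j}R_{kj}R_{jl}=a_{kl}$. For each $i$ let $T_{i}$ be the matrix whose $i$-th row equals the $i$-th row of $R$ and whose remaining rows vanish. A direct computation then gives $\sum_{i=1}^{m}T_{i}^{*}T_{i}=R^{2}=A$, while $T_{i}T_{i}^{*}=D_{i}$, where $D_{i}=\diag(0,\dots,a_{ii},\dots,0)$ carries $a_{ii}$ in the $(i,i)$ slot and zeros elsewhere. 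Since $T_{i}^{*}T_{i}$ and $T_{i}T_{i}^{*}$ are unitarily congruent, there are unitaries $U_{i}$ with $T_{i}^{*}T_{i}=U_{i}D_{i}U_{i}^{*}$, whence
$$
A=\sum_{i=1}^{m}U_{i}D_{i}U_{i}^{*}.
$$
This is exactly the $m$-block form of \eqref{F-2.3}; the case $m=2$ is the factorization already carried out in the proof of Corollary \ref{C-4.4}.

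Next, iterating \eqref{F-4.1} by a one-line induction on the number of summands gives
$$
\Bigl\|g\Bigl(\sum_{i=1}^{m}C_{i}\Bigr)\Bigr\|^{q}\le\sum_{i=1}^{m}\|g(C_{i})\|^{q}
$$
for positive $C_{i}$ (each partial sum is again positive, so the two-term inequality applies at every step). Taking $C_{i}=U_{i}D_{i}U_{i}^{*}$ and using $g(U_{i}D_{i}U_{i}^{*})=U_{i}g(D_{i})U_{i}^{*}$ together with unitary invariance, I obtain $\|g(A)\|^{q}\le\sum_{i}\|g(D_{i})\|^{q}$. Because $g(0)=0$, the matrix $g(D_{i})=\diag(0,\dots,g(a_{ii}),\dots,0)$ has $g(a_{ii})$ as its only nonzero entry. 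Specializing to the trace norm and using $g(A)\ge0$, this reads ${\mathrm{Tr\,}}g(A)=\|g(A)\|_{1}$ and $\|g(D_{i})\|_{1}=g(a_{ii})$, so that $({\mathrm{Tr\,}}g(A))^{q}\le\sum_{i}g^{q}(a_{ii})$; taking $q$-th roots finishes the proof.

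The induction and the trace-norm bookkeeping are routine, and every inequality in sight is supplied by Theorem \ref{T-4.1}\,(1) via \eqref{F-4.1}. I therefore expect the only delicate point to be the block identity $T_{i}T_{i}^{*}=D_{i}$, i.e.\ checking that the $m$-term splitting genuinely isolates the diagonal block $a_{ii}$; this verification, rather than any analytic estimate, is the (modest) crux, just as the factorization was the heart of Corollary \ref{C-4.4}.
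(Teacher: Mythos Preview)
Your argument is correct and is exactly the approach the paper indicates: you carry out the $m$-block version of the decomposition \eqref{F-2.3} (your $T_i$ are the row-slices of $A^{1/2}$, the direct generalization of the $T,S$ in the proof of Corollary~\ref{C-4.4}), iterate \eqref{F-4.1}, and specialize to the trace norm. The verification $T_iT_i^*=D_i$ you flag is indeed the only point to check, and it is correct as written.
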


\vskip 10pt 
Corollary \ref{C-4.5} is unusual as it reverses the standard majorization inequality
$$
{\mathrm{Tr\,}} g(A) \ge  \sum_{i=1}^m g(a_{ii}).
$$

Corollary \ref{C-4.4} is understood with the natural convention that a symmetric norm
$\|\cdot\|$ on $\bM_{n+m}$ induces a symmetric norm on $\bM_n$, denoted with the same
symbol, by setting for all $A\in\bM_n$,
$$
 \| A\| :=\left\| \begin{bmatrix} A &0 \\
0 &0\end{bmatrix}\right\|.
$$
In the opposite way, starting from a symmetric norm $\|\cdot\|$  on $\bM_{n}$, we can
extend it to a symmetric norm  $\|\cdot\|_{\wedge}$ on $\bM_{n+m}$ by setting for all
$A\in\bM_{n+m}$,
$$
 \| A\|_{\wedge} := \| A^{\wedge} \|
$$
where $A^{\wedge}$ denotes the $n\times n$ diagonal matrix whose entries down to the
diagonal are the $n$ largest singular values of $A$. With this convention, we may remove
the assumption $g(0)=0$ in Corollary \ref{C-4.4}:

\vskip 10pt 
\begin{cor}\label{C-4.6}
Let $g:[0,\infty)\to [0,\infty)$ be convex and  $ q\in(0,1)$.  
If $g^q(t)$  is subadditive, then
\begin{equation*}
\left\| \, g\left(\begin{bmatrix} A &X \\
X^* &B\end{bmatrix}\right) \right\|_{\wedge} \, \le
\left( \left\|  g(A)\right\|^q + \left\|  g(B) \right\|^q \right)^{1/q}
\end{equation*}
for all positive operators  partitioned in blocks of same size and all symmetric norms.
\end{cor}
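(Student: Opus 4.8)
The plan is to repeat the proof of Corollary \ref{C-4.4}, but to apply the subadditivity inequality \eqref{F-4.1} to the \emph{extended} norm $\|\cdot\|_\wedge$ on $\bM_{2n}$ rather than to the ambient norm; the truncation built into $\|\cdot\|_\wedge$ is precisely what absorbs the value $g(0)$ that no longer vanishes. Writing $M$ for the given block matrix, the decomposition \eqref{F-2.3} reads $M = UA_0U^* + VB_0V^*$ with $A_0 = A\oplus 0$, $B_0 = 0\oplus B$ in $\bM_{2n}^+$ and $U,V\in\bM_{2n}$ unitary.

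Recall, as observed just before the statement, that $\|\cdot\|_\wedge$ is a genuine symmetric norm on $\bM_{2n}$, so that Theorem \ref{T-4.1}\,(1) applies to it. Using the unitary invariance of $\|\cdot\|_\wedge$ together with $g(WCW^*)=Wg(C)W^*$ for unitary $W$, inequality \eqref{F-4.1} yields
\[
\|g(M)\|_\wedge^{\,q}\le \|g(A_0)\|_\wedge^{\,q}+\|g(B_0)\|_\wedge^{\,q}.
\]

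It then remains to identify $\|g(A_0)\|_\wedge$ with $\|g(A)\|$ (and $\|g(B_0)\|_\wedge$ with $\|g(B)\|$). Since $g(A_0)=g(A)\oplus g(0)I_n$, its $2n$ singular values are the $n$ numbers $g(\lambda_i(A))$ together with $n$ copies of $g(0)$, and $\|\cdot\|_\wedge$ keeps only the $n$ largest among them. As $\lambda_i(A)\ge 0$ and $g$ is non-decreasing, each $g(\lambda_i(A))\ge g(0)$, so the retained values are exactly $g(\lambda_1(A)),\dots,g(\lambda_n(A))$; hence $g(A_0)^\wedge=g(A)^\downarrow$ and $\|g(A_0)\|_\wedge=\|g(A)\|$. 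Substituting and raising to the power $1/q$ gives the asserted inequality.

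I expect this last identification to be the only real obstacle: it is exactly here that the $\wedge$-truncation earns its keep, discarding the $n$ spurious eigenvalues equal to $g(0)$ contributed by the zero diagonal block. The step tacitly uses that $g(0)=\min g$, i.e.\ that $g$ is non-decreasing, so that these eigenvalues fall below the genuine values $g(\lambda_i(A))$ rather than overtaking them and entering the top $n$; this monotonicity is automatic under the hypothesis $g(0)=0$ of Corollary \ref{C-4.4}, which is how that case was handled. I would therefore carry out the argument under the understanding that $g$ is non-decreasing (equivalently $g(0)=\min g$), and flag this as the precise condition the truncation requires.
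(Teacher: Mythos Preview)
Your approach is precisely the one the paper intends: the sentence preceding the corollary (``With this convention, we may remove the assumption $g(0)=0$ in Corollary~\ref{C-4.4}'') signals that one reruns the proof of Corollary~\ref{C-4.4} with $\|\cdot\|_\wedge$ in place of the ambient norm, exactly as you do. The paper supplies no further details, so there is nothing to compare beyond this.

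Your caveat about monotonicity is not only correct for the method but essential for the statement itself: as written, the corollary is false. Take $n=1$, the norm $|\cdot|$ on $\bM_1$, and $g(t)=e^{-t}$, which is convex, non-negative, and has $g^q$ subadditive for every $q\in(0,1)$ (any non-increasing non-negative function is subadditive). For $t>0$ set $A=B=X=t$, so that
\[
M=\begin{bmatrix}t&t\\t&t\end{bmatrix}
\]
has eigenvalues $0$ and $2t$; then $g(M)$ has eigenvalues $1$ and $e^{-2t}$, whence $\|g(M)\|_\wedge=1$, while
\[
\bigl(\|g(A)\|^q+\|g(B)\|^q\bigr)^{1/q}=2^{1/q}e^{-t}\longrightarrow 0
\quad(t\to\infty).
\]
So the inequality fails for large $t$. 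The corollary therefore requires the hypothesis you isolate, namely that $g$ be non-decreasing (equivalently $g(0)=\min g$); under that hypothesis your identification $\|g(A\oplus 0)\|_\wedge=\|g(A)\|$ goes through and the proof is complete.
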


\vskip 10pt
The remaining part of this section deals with consequences of \eqref{F-4.2}.

\vskip 10pt
\begin{cor}\label{C-4.7}
Let $f(t)=a_1t +a_2t^{1/2}+\cdots +a_mt^{1/m}$ with all non-negative $a_k$'s. Then, for
all positive operators $A,\, B$ and all symmetric anti-norms,
\begin{equation*}
\| f(A+B) \|^m_! \ge \| f(A) \|^m_! +\| f(B) \|^m_!.
\end{equation*}
\end{cor}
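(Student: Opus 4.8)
The plan is to deduce the statement directly from Theorem~\ref{T-4.1}\,(2), applied with the exponent $p=m$. (For $m=1$ the function $f(t)=a_1t$ is linear, and the asserted inequality is then nothing but the superadditivity axiom for anti-norms, so I assume $m\ge 2$, which gives $p=m>1$ as the theorem requires.) To invoke Theorem~\ref{T-4.1}\,(2) I must verify its two hypotheses for $f(t)=\sum_{k=1}^m a_kt^{1/k}$: that $f$ is concave on $[0,\infty)$, and that $f^m$ is superadditive there.

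Concavity is immediate. Each power $t\mapsto t^{1/k}$ is concave on $[0,\infty)$ for $k\ge 1$, and $f$ is a non-negative linear combination of these functions; hence $f$ is concave (and $f(0)=0$).

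The substantive point is the superadditivity of $f^m$, which I will obtain as the dual of the quasi-concavity argument used in Corollary~\ref{C-4.3}. There the key fact was that $g^{1/m}(t)/t$ is non-increasing; here I claim instead that $f^m(t)/t$ is \emph{non-decreasing}, equivalently that $f(ut)\le u^{1/m}f(t)$ for all $u\in(0,1)$ and $t>0$. This last inequality holds termwise: since $1/k\ge 1/m$ for $1\le k\le m$ and $u\in(0,1)$, one has $u^{1/k}\le u^{1/m}$, so that $a_k(ut)^{1/k}=a_ku^{1/k}t^{1/k}\le u^{1/m}a_kt^{1/k}$; summing over $k$ gives $f(ut)\le u^{1/m}f(t)$. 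Raising to the $m$-th power yields $f^m(ut)\le u\,f^m(t)$, i.e. $f^m(t)/t$ is non-decreasing.

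From this ``star-shapedness'' the superadditivity of $f^m$ is routine: for $a,b>0$ set $s=a+b$ and use $f^m(a)=f^m((a/s)\,s)\le(a/s)f^m(s)$ together with the analogous bound for $b$, whence $f^m(a)+f^m(b)\le f^m(s)=f^m(a+b)$. With $f$ concave and $f^m$ superadditive, Theorem~\ref{T-4.1}\,(2) with $p=m$ delivers $\|f(A+B)\|_!^m\ge\|f(A)\|_!^m+\|f(B)\|_!^m$, as desired. I do not expect any genuine obstacle here: the one thing to get right is the correct monotone ratio ($f(t)/t^{1/m}$ non-decreasing, rather than $g^{1/m}(t)/t$ non-increasing), which is precisely the mirror image of the polynomial computation already carried out in Corollary~\ref{C-4.3}.
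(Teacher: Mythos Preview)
Your proof is correct and follows exactly the route indicated in the paper: apply Theorem~\ref{T-4.1}\,(2) with $p=m$, observing that $f$ is concave and that $f^m(t)/t$ is non-decreasing (quasi-convexity), which yields the superadditivity of $f^m$ just as in the mirror argument of Corollary~\ref{C-4.3}. Your termwise verification of $f(ut)\le u^{1/m}f(t)$ and the handling of the trivial case $m=1$ are fine.
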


\vskip 5pt
\begin{proof}
The proof is similar to that of Corollary \ref{C-4.3}: One first checks that
$t\mapsto f^m(t)/t$ is increasing on $(0,\infty)$ (this property is called
{\it quasi-convexity}).
\end{proof}

\vskip 10pt
Another example satisfying to the assumptions of \eqref{F-4.2}, with $p=2$, is
$$
f(t)=t-\frac{(t-1)_+}{2}.
$$
This follows from the fact that $f^2(t)$ is quasi-convex. 

Let $p\in(1,\infty)$. If $f_1, f_2:[0,\infty)\to [0,\infty)$ are concave
functions such that $f_1^p(t)$ and $f_2^p(t)$ are quasi-convex, then $f_1+f_2$ shares
the same properties. This provides a sub-class, closed under sum, of the class of
functions satisfying the assumptions of Theorem \ref{T-4.1}\,(2). Furthermore, under the
above assumptions, $\sqrt{f_1(t)f_2(t)}$ is also concave and  $\{\sqrt{f_1(t)f_2(t)}\}^p$
is quasi-convex. Thus, this sub-class of concave functions with quasi-convex $p$-powers is
invariant for both arithmetic and geometric means.

The next corollary is an extension on Minkowski's inequality (when $h(t)=t$).
 
\vskip 10pt
\begin{cor}\label{C-4.8}
Let $h:[0,\infty)\to [0,\infty)$ be superadditive. Assume that $h(t)$ is strictly positive
and $C^2$ on $(0,\infty)$ and that $(\log h(t))''<0$ for all $t>0$ (hence $h(t)$ is
strictly log-concave). Then, for all positive operators $A,\, B$ on an $n$-dimensional
space,
\begin{equation*}
{\det}^{1/n} h(A+B) \ge {\det}^{1/n} h(A) + {\det}^{1/n} h(B).
\end{equation*}
\end{cor}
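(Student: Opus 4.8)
The plan is to work directly with the determinant anti-norm $\det^{1/n}$ (Example~\ref{E-3.5}) rather than to invoke \eqref{F-4.2}: the hypotheses only force $h$ to be superadditive and log-concave, \emph{not} concave (e.g. $h(t)=t^2$ qualifies and is convex), so Theorem~\ref{T-4.1}(2) does not apply, and even when it does it yields a $p$-power inequality rather than the plain determinant statement. As a preliminary reduction, note that superadditivity together with $h\ge0$ forces $h(0)=0$ and makes $h$ non-decreasing, hence continuous on $[0,\infty)$. Since both sides are continuous in $(A,B)$ on $\bM_n^+$, I may replace $A,B$ by $A+\varepsilon I,\,B+\varepsilon I$ and let $\varepsilon\downarrow0$, so it suffices to treat positive definite $A,B$, for which every eigenvalue is positive and $\log h$ is well defined at each of them.

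The heart of the argument is to pass from $A+B$ to the diagonal matrix $A^{\downarrow}+B^{\downarrow}$ using log-concavity in place of concavity. By the Ky~Fan majorization \eqref{F-4.3} we have $A+B\prec A^{\downarrow}+B^{\downarrow}$, that is $\lambda(A+B)\prec(\lambda_i(A)+\lambda_i(B))_{i=1}^n$. Writing $\psi:=\log h$, the assumption $(\log h)''<0$ makes $\psi$ concave, so $x\mapsto\sum_i\psi(x_i)$ is Schur-concave and the majorization reverses it:
$$\sum_{i=1}^n\log h(\lambda_i(A+B))\ \ge\ \sum_{i=1}^n\log h\bigl(\lambda_i(A)+\lambda_i(B)\bigr).$$
Exponentiating and taking $n$-th roots, this reads $\det^{1/n}h(A+B)\ge\bigl(\prod_{i=1}^n h(\lambda_i(A)+\lambda_i(B))\bigr)^{1/n}$. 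This is exactly where log-concavity (and not concavity) is the correct hypothesis, and I expect it to be the main point of the proof; observe that only $(\log h)''\le0$ is actually used.

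It then remains to prove the scalar inequality
$$\Bigl(\prod_{i=1}^n h(\lambda_i(A)+\lambda_i(B))\Bigr)^{1/n}\ \ge\ \Bigl(\prod_{i=1}^n h(\lambda_i(A))\Bigr)^{1/n}+\Bigl(\prod_{i=1}^n h(\lambda_i(B))\Bigr)^{1/n}.$$
Put $p_i:=h(\lambda_i(A))$ and $q_i:=h(\lambda_i(B))$. Superadditivity of $h$ gives $h(\lambda_i(A)+\lambda_i(B))\ge p_i+q_i\ge0$ for every $i$, and since $x\mapsto(\prod_i x_i)^{1/n}$ is non-decreasing in each coordinate on $\bR_+^n$, the left-hand side is $\ge\bigl(\prod_i(p_i+q_i)\bigr)^{1/n}$. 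Finally $\bigl(\prod_i(p_i+q_i)\bigr)^{1/n}\ge\bigl(\prod_i p_i\bigr)^{1/n}+\bigl(\prod_i q_i\bigr)^{1/n}$ is precisely Minkowski's inequality \eqref{F-2.4} for the diagonal matrices $\diag(p_1,\dots,p_n)$ and $\diag(q_1,\dots,q_n)$. Combining this with the previous paragraph yields $\det^{1/n}h(A+B)\ge\det^{1/n}h(A)+\det^{1/n}h(B)$, as desired. The only genuine work is the Schur-concavity reduction; the closing step is just the classical Minkowski inequality for diagonal matrices together with the coordinatewise monotonicity of the geometric mean.
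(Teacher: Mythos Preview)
Your proof is correct and takes a genuinely different route from the paper's. The paper \emph{does} invoke Theorem~\ref{T-4.1}(2), but not applied to $h$ itself: it observes that on any compact interval $[a,b]\subset(0,\infty)$ containing the spectra of $A$, $B$, $A+B$, the strict inequality $(\log h)''<0$ allows one to choose $q\in(0,1)$ small enough that $h^q$ is concave on $[a,b]$ (since $(h^q)''=qh^q\bigl(q\{(\log h)'\}^2+(\log h)''\bigr)$); then Theorem~\ref{T-4.1}(2) with $f=h^q$, $p=1/q$, and $\|\cdot\|_!=\det^{1/n}$ yields the result directly because $\|h^q(\cdot)\|_!^{1/q}=\det^{1/n}h(\cdot)$. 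Your approach bypasses this detour entirely: you use the Ky~Fan majorization together with Schur-concavity of $\sum_i\log h(x_i)$ to get $\det h(A+B)\ge\prod_i h(\lambda_i(A)+\lambda_i(B))$, then finish with superadditivity and the scalar Minkowski inequality. Two small advantages of your argument: it is more elementary (no appeal to the general anti-norm machinery of Lemma~\ref{L-4.2} and Proposition~\ref{P-3.7}), and it only uses $(\log h)''\le0$, whereas the paper's choice-of-$q$ step genuinely needs the strict inequality to guarantee a uniform $q$ on $[a,b]$. So your opening remark that Theorem~\ref{T-4.1}(2) ``does not apply'' is slightly off the mark---it does, just to $h^q$ rather than $h$---but your alternative stands as a clean self-contained proof.
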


\vskip 5pt
\begin{proof}
We may assume that $A,\, B$ and $A+B$ are invertible, thus with spectra lying on a compact
interval $[a,b]\subset(0,\infty)$. There exists a $q\in(0,1)$ small enough to ensure that 
$\exp\{q\log h(t)\}=h^q(t)$ is concave on $[a,b]$. Indeed, this is obvious since
$$
(h^q(t))''=qh^q(t)\left(q\{(\log h(t))'\}^2+(\log h(t))''\right).
$$
Note that to apply Theorem 4.1\,(2) to the operators $A,\, B$ and some function $f(t)$,
it suffices to have concavity of $f(t)$ on $[a,b]$ and superadditivity of $f^p(t)$ on
$[a,\infty)$. Thus, we may apply Theorem 4.1\,(2) to $f(t)=h^q(t)$ and
$p=1/q\in(1,\infty)$. This proves the corollary.
\end{proof}

\vskip 10pt 
\begin{cor}\label{C-4.9}
Let $f:[0,\infty)\to [0,\infty)$ be concave and  $ p\in(1,\infty)$. If  $f^p(t)$  is
superadditive, then, for all partitioned positive operators and all Schatten $r$-anti-norms,
\begin{equation*}
\left\| \, f\left(\begin{bmatrix} A &X \\
X^* &B\end{bmatrix}\right) \right\|_r \, \ge
\left( \left\|  f(A)\right\|_r^p + \left\|  f(B) \right\|_r^p\right)^{1/p}.
\end{equation*}
\end{cor}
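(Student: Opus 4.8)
The plan is to combine Theorem~\ref{T-4.1}\,(2) with the block decomposition \eqref{F-2.3}, mirroring the way Corollary~\ref{C-4.4} combines \eqref{F-4.1} with \eqref{F-2.3}. Write $M=\begin{bmatrix} A &X \\ X^* &B\end{bmatrix}\in\bM_{n+m}^+$ with $A\in\bM_n^+$, $B\in\bM_m^+$, and set $\tilde A=\begin{bmatrix} A &0 \\ 0 &0\end{bmatrix}$, $\tilde B=\begin{bmatrix} 0 &0 \\ 0 &B\end{bmatrix}$. First I would invoke \eqref{F-2.3} (whose proof is recalled inside Corollary~\ref{C-4.4}) to obtain unitaries $U,V\in\bM_{n+m}$ with
$$
M = U\tilde AU^* + V\tilde BV^*.
$$
Putting $P=U\tilde AU^*$ and $Q=V\tilde BV^*$, these are positive operators in $\bM_{n+m}^+$ with $P+Q=M$.

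Since every Schatten $r$-anti-norm is a symmetric anti-norm (Examples~\ref{E-3.4} and~\ref{E-3.6}) and $f$ is concave with $f^p$ superadditive, I would then apply Theorem~\ref{T-4.1}\,(2) to the pair $P,Q$ in $\bM_{n+m}^+$, which gives
$$
\| f(M) \|_r^{p} = \| f(P+Q) \|_r^{p} \ge \| f(P) \|_r^{p} + \| f(Q) \|_r^{p}.
$$
As $f(P)=Uf(\tilde A)U^*$ and $f(Q)=Vf(\tilde B)V^*$, unitary invariance of the anti-norm rewrites the right-hand side as $\|f(\tilde A)\|_r^{p}+\|f(\tilde B)\|_r^{p}$.

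It then remains to identify $\|f(\tilde A)\|_r$ with $\|f(A)\|_r$, and similarly for $B$. Here I would first note that superadditivity of $f^p$ forces $f(0)=0$: taking $a=b=0$ in $f^p(a+b)\ge f^p(a)+f^p(b)$ and using $f\ge 0$ gives $f^p(0)\le 0$. Consequently $f(\tilde A)=\begin{bmatrix} f(A) &0 \\ 0 &0\end{bmatrix}$, whose nonzero singular values are exactly those of $f(A)$. For a Schatten $r$-anti-norm with $r\in(0,1)$ the appended zero eigenvalues contribute $0^{r}=0$, so $\|f(\tilde A)\|_r=\|f(A)\|_r$, and the desired inequality follows. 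I expect this padding identity to be the one genuinely delicate point: unlike the symmetric-norm case of Corollary~\ref{C-4.4}, appending zeros does change an anti-norm, and for the negative-exponent functionals of Example~\ref{E-3.6} it makes the value collapse to $0$. Thus the clean nontrivial statement lives in the range $r\in(0,1)$ (where $\|\cdot\|_r$ is a regular anti-norm), and the final equality $\|f(\tilde A)\|_r=\|f(A)\|_r$ relies squarely on $f(0)=0$ together with $0^r=0$.
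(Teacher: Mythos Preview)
Your proof is correct and follows exactly the approach the paper intends: combine Theorem~\ref{T-4.1}\,(2) with the decomposition~\eqref{F-2.3}, mirroring the derivation of Corollary~\ref{C-4.4}. The paper gives no explicit proof here, but the remark immediately following the corollary (about the convention $A\mapsto A\oplus0$ for general anti-norms) confirms this route and addresses precisely the padding issue you carefully flag; your observation that $f(0)=0$ is forced and that the identity $\|f(\tilde A)\|_r=\|f(A)\|_r$ holds cleanly only for $r\in(0,1]$ is spot on.
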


This can be extended to all anti-norms $\|\cdot\|_!$ on $\bM_{n+m}$ with the convention
that it induces an anti-norm on $\bM_{n}$ by replacing $A\in \bM_{n}$ with
$A\oplus 0\in\bM_{n+m}$.

\vskip 10pt 
\begin{cor}\label{C-4.10}
Let $f:[0,\infty)\to [0,\infty)$ be concave and  $ p\in(1,\infty)$. If  $f^p(t)$  is
superadditive, then
\begin{equation*}
{\mathrm{Tr\,}} f(A) \ge \left( \sum_{i=1}^m f^p(a_{ii}) \right)^{1/p}
\end{equation*}
for all positive $m\times m$ matrix $A=[a_{ij}]$.
\end{cor}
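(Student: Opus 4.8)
The plan is to read this as the trace specialization of the block-matrix anti-norm inequality of Corollary \ref{C-4.9}, iterated down the diagonal, in exact parallel with the passage from Corollary \ref{C-4.4} to Corollary \ref{C-4.5} in the subadditive case. Since the trace is itself a symmetric anti-norm (Example \ref{E-3.3}), and since Corollary \ref{C-4.9} extends to every symmetric anti-norm by the remark following its proof, I may apply that corollary with $\|\cdot\|_!=\mathrm{Tr}$. Concretely, for any $2\times2$ positive block-matrix this yields
$$
\mathrm{Tr}\, f\left(\begin{bmatrix} A &X \\ X^* &B\end{bmatrix}\right) \ge \bigl( (\mathrm{Tr}\, f(A))^p + (\mathrm{Tr}\, f(B))^p \bigr)^{1/p},
$$
where the trace anti-norms on the smaller blocks are the ones induced by the natural convention of the remark (consistent here because $\mathrm{Tr}(M\oplus 0)=\mathrm{Tr}(M)$).

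First I would partition $A=[a_{ij}]\in\bM_m^+$ with its scalar entry $a_{11}$ as the top-left block and the $(m-1)\times(m-1)$ principal submatrix $A'$ (obtained by deleting the first row and column) as the bottom-right block. Applying the displayed inequality and using that $\mathrm{Tr}\, f(a_{11})=f(a_{11})$ for the $1\times1$ corner gives
$$
\mathrm{Tr}\, f(A) \ge \bigl( f^p(a_{11}) + (\mathrm{Tr}\, f(A'))^p \bigr)^{1/p}.
$$
I would then iterate, peeling off $a_{22},\dots,a_{mm}$ in turn: writing $A^{(k)}$ for the principal submatrix on the last $m-k+1$ coordinates, the same two-block estimate gives $\mathrm{Tr}\, f(A^{(k)}) \ge \bigl( f^p(a_{kk}) + (\mathrm{Tr}\, f(A^{(k+1)}))^p \bigr)^{1/p}$ at each stage, the process terminating at the $1\times1$ block $a_{mm}$, where $(\mathrm{Tr}\, f(a_{mm}))^p=f^p(a_{mm})$.

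The only thing requiring care is the bookkeeping of the iteration: I would raise each two-block estimate to the $p$-th power before substituting it into the previous one, which is legitimate because $t\mapsto t^p$ is increasing on $[0,\infty)$. The right-hand sides then telescope additively, accumulating exactly the terms $f^p(a_{11}),\dots,f^p(a_{mm})$ inside a single $p$-th root, and one arrives at
$$
\mathrm{Tr}\, f(A) \ge \Bigl( \sum_{i=1}^m f^p(a_{ii}) \Bigr)^{1/p}.
$$
No new analytic ingredient beyond Corollary \ref{C-4.9} and the triviality $\mathrm{Tr}\, f(a_{ii})=f(a_{ii})$ is needed; the substantive work has already been done in establishing the block-matrix inequality, so the main (and minor) obstacle is simply organizing the successive substitutions cleanly.
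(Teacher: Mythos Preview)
Your proposal is correct and follows essentially the same approach as the paper: Corollary~\ref{C-4.10} is presented there without proof, as the trace specialization of the $m$-block version of Corollary~\ref{C-4.9} (exactly paralleling the passage from Corollary~\ref{C-4.4} to Corollary~\ref{C-4.5}), and your iterative peeling of the diagonal entries is the natural way to realize that $m$-block version from the $2\times2$ case. The one point worth noting explicitly is that the superadditivity of $f^p$ forces $f(0)=0$, so $f(A\oplus0)=f(A)\oplus0$ and the induced trace convention behaves as you need; otherwise your argument is complete.
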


\vskip 10pt 
Corollary \ref{C-4.10} is unusual as it reverses the standard majorization inequality
$$
{\mathrm{Tr\,}} f(A) \le  \sum_i f(a_{ii})
$$
that has been strengthened in Corollary \ref{C-2.5}.

\section{Convexity and concavity criteria for trace functionals}

Let $\tau:=(1/n){\mathrm{Tr}}$ denote the normalized trace on $\bM_n$.
In the preceding section we have studied functionals on $\bM_n^+$ of the type
$X\mapsto \| h(X)\|^r$ or $X\mapsto \| h(X)\|^r_!$
and obtained superadditivity or subadditivity results according to that $h^r(x)$ is
superadditive or subadditive. We may also address the question of convexity/concavity
properties of these functionals. In case of the trace norm, this leads us to consider conditions
on functions $\varphi(t)$ ensuring that $A\mapsto\varphi\circ\tau\circ f(A)$ is convex
(resp., concave) on $\bM_n^+$ whenever $\varphi\circ f(t)$ is convex (resp., concave) on
$[0,\infty)$. In this section we will treat the question in the setting of a general
interval $\Lambda\subset\bR$. Let $\bD_n\{\Xi\}$ denote the diagonal part of
$\bM_n\{\Xi\}$ for an interval $\Xi$.

\begin{prop}\label{P-5.1}
Let $\varphi(t)$ be a strictly increasing continuous function on an interval $\Lambda$
and $\Xi:=\varphi(\Lambda)$. If the functional
$A\mapsto\varphi\circ\tau\circ\varphi^{-1}(A)$ is convex (resp., concave) on
$\bM_n\{\Xi\}$, or on  $\bD_n\{\Xi\}$, then $\varphi$ is necessarily concave (resp.,
convex) on $\Lambda$.
\end{prop}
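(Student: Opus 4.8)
The plan is to reduce everything to the diagonal (abelian) part, where the functional becomes completely explicit, and then to extract midpoint convexity of $\varphi^{-1}$ from a single well-chosen comparison. First I would observe that it suffices to treat the hypothesis on the diagonal part $\bD_n\{\Xi\}$: since $\bD_n\{\Xi\}$ is a convex subset of $\bM_n\{\Xi\}$, convexity (resp.\ concavity) on the larger set restricts to it, so the conclusion follows under either of the two stated assumptions. On a diagonal matrix $A=\diag(x_1,\dots,x_n)$ with $x_i\in\Xi$, functional calculus gives $\varphi^{-1}(A)=\diag(\varphi^{-1}(x_1),\dots,\varphi^{-1}(x_n))$, so the functional takes the explicit scalar form
\[
F(x_1,\dots,x_n)=\varphi\!\left(\frac1n\sum_{i=1}^n\varphi^{-1}(x_i)\right),
\]
a permutation-symmetric function on the convex box $\Xi^n\cong\bD_n\{\Xi\}$. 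We take $n\ge2$; the case $n=1$ carries no information, as $F$ is then the identity.

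The key step is to test midpoint convexity of $F$ against a pair of points related by a transposition. Writing $\psi:=\varphi^{-1}$ (a strictly increasing continuous bijection $\Xi\to\Lambda$) and fixing $a,b\in\Xi$, I would compare the points $u=(a,b,b,\dots,b)$ and $v=(b,a,b,\dots,b)$. By permutation symmetry $F(u)=F(v)$, while their midpoint is $(\tfrac{a+b}{2},\tfrac{a+b}{2},b,\dots,b)$. Midpoint convexity of $F$ then reads $F(\text{midpoint})\le F(u)$; since $\varphi$ is strictly increasing I may strip it off by applying $\psi$, and after multiplying by $n$ the $(n-2)\psi(b)$ on the left and the $(n-1)\psi(b)$ on the right reduce to a single surviving $\psi(b)$, leaving precisely
\[
\psi\!\left(\frac{a+b}{2}\right)\le\frac{\psi(a)+\psi(b)}{2}.
\]
Thus $\psi=\varphi^{-1}$ is midpoint convex on $\Xi$, and being continuous it is convex there. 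Finally, the inverse of a strictly increasing convex function is concave, so $\varphi$ is concave on $\Lambda$, as claimed; the concave case is entirely parallel, reversing all inequalities to yield convexity of $\psi$ and hence concavity-to-convexity of $\varphi$.

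I expect the only genuine obstacle to be locating the right test configuration. The transposition pair is exactly what forces the two endpoint values to coincide and what makes the spurious $\psi(b)$ contributions cancel, so that the $n$-dependent averaging collapses to a clean two-point inequality for $\psi$ alone; a careless choice of test points would leave an inequality entangling several arguments of $\psi$. Once that choice is made, the remaining two moves---upgrading midpoint convexity to convexity via continuity, and inverting convexity of $\psi$ into concavity of $\varphi$---are standard and require no matrix input.
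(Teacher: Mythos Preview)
Your argument is correct. Both your proof and the paper's reduce to the diagonal picture and exploit the permutation symmetry of $F(x_1,\dots,x_n)=\varphi\bigl(n^{-1}\sum_i\varphi^{-1}(x_i)\bigr)$, but the symmetrization device differs. The paper averages a point $\vec x$ over all $n$ cyclic permutations to land on the constant vector $(\tau(\vec x),\dots,\tau(\vec x))$, which yields directly $\tau(\vec x)\le\varphi\circ\tau\circ\varphi^{-1}(\vec x)$, i.e.\ the $n$-point Jensen inequality for $\varphi^{-1}$; convexity then follows by continuity. You instead compare just two points related by a single transposition, so that the common value $F(u)=F(v)$ and the explicit midpoint force the clean two-point inequality $\psi\bigl(\tfrac{a+b}{2}\bigr)\le\tfrac{\psi(a)+\psi(b)}{2}$, and you again close with continuity. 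Your route is a bit more economical (two test points suffice, and the cancellation of the $(n-2)\psi(b)$ terms is transparent), while the paper's cyclic averaging is more symmetric and delivers the full $n$-point Jensen form in one stroke; the remaining passage from midpoint convexity of $\psi$ to concavity of $\varphi=\psi^{-1}$ is identical in both.
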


\begin{proof}
 The assumption of convexity (resp., concavity) on $\bD_n\{\Xi\}$ means the following condition:
\begin{itemize}
\item[(c$'$)] The $n$-variable function
$$
(x_1,\dots,x_n)\in\Xi^n\mapsto\varphi\Biggl(
\frac{1}{n}\sum_{i=1}^n\varphi^{-1}(x_i)\Biggr)
$$
is convex (resp., concave).
\end{itemize}

Thus, we may show that convexity condition (c$'$) implies the concavity of $\varphi$ (the
other case is similar). For $\vec x=(x_1,\dots,x_n)\in\Xi^n$ we use the notations
$\tau(\vec x):=n^{-1}\sum_{i=1}^nx_i$ and
$\varphi^{-1}(\vec x):=(\varphi^{-1}(x_1),\dots,\varphi^{-1}(x_n))$. Consider the cyclic
permutations $\vec x^{(0)}:=\vec x$ and $\vec x^{(k)}:=(x_{k+1},\dots,x_n,x_1,\dots,x_k)$
for $k=1,2,\dots,n-1$. Since
$n^{-1}\sum_{k=0}^n\vec x^{(k)}=(\tau(\vec x),\dots,\tau(\vec x))$, we have
$$
\tau(\vec x)=\varphi\circ\tau\circ\varphi^{-1}\Biggl(
\frac{1}{n}\sum_{k=0}^{n-1}\vec x^{(k)}\Biggr)
\le\frac{1}{n}\sum_{k=0}^{n-1}\varphi\circ\tau\circ\varphi^{-1}(\vec x^{(k)})
=\varphi\circ\tau\circ\varphi^{-1}(\vec x).
$$
Since $\varphi^{-1}$ is increasing on $\Xi$, we have
$\varphi^{-1}\circ\tau(\vec x)\le\tau\circ\varphi^{-1}(\vec x)$, which means that
$\varphi^{-1}$ is convex on $\Xi$, hence $\varphi$ is concave on $\Lambda$.
\end{proof}

The above proposition shows that the concavity or convexity of $\varphi(t)$ is necessary
to state our criteria in the next theorem. Thus, there is no serious loss of generality
by assuming that $\varphi(t)$ is smooth with the strictly negative or positive second
derivative. This  assumption yields a simple analytic criterion.

\begin{theorem}\label{T-5.2}
Let $\varphi(t)$ be a continuous function on an interval $\Lambda$ and
$\Xi:=\varphi(\Lambda)$. Assume that $\varphi(t)$ is $C^2$ on $\Lambda^\circ$, the interior
of $\Lambda$, and that $\varphi'(t)>0$ and $\varphi''(t)<0$ (resp., $\varphi''(t)>0$) on
$\Lambda^\circ$. Then the following conditions are equivalent:
\begin{itemize}
\item[(a)] The function $\varphi'(t)/\varphi''(t)$ is convex (resp., concave) on
$\Lambda^\circ$.
\item[(b)] The functional $A\mapsto\varphi\circ\tau\circ\varphi^{-1}(A)$ is convex
(resp., concave) on $\bM_n\{\Xi\}$.
\item[(c)] The functional $A\mapsto\varphi\circ\tau\circ\varphi^{-1}(A)$ is convex
(resp., concave) on $\bD_n\{\Xi\}$.
\item[(d)] For  any interval $\Omega$ and any function $f:\Omega\to\Lambda$ such that
$\varphi\circ f(t)$ is convex (resp, concave), the functional
$A\mapsto\varphi\circ\tau\circ f(A)$ is convex (resp., concave) on $\bM_n\{\Omega\}$.
\end{itemize}
\end{theorem}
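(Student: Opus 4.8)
My plan is to prove the four conditions equivalent through the cycle $(a)\Rightarrow(d)\Rightarrow(b)\Rightarrow(c)\Rightarrow(a)$, in which only the first and the last links carry content. Indeed $(d)\Rightarrow(b)$ is obtained by specializing $\Omega=\Xi$ and $f=\varphi^{-1}$, so that $\varphi\circ f=\mathrm{id}$ is both convex and concave and $\varphi\circ\tau\circ f=\varphi\circ\tau\circ\varphi^{-1}$; and $(b)\Rightarrow(c)$ is the restriction of a convex (resp. concave) functional to the convex subset $\bD_n\{\Xi\}\subset\bM_n\{\Xi\}$, on which $\varphi\circ\tau\circ\varphi^{-1}$ is exactly the function of condition (c$'$) in the proof of Proposition \ref{P-5.1}. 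I treat the case $\varphi''<0$ (``convex''); the case $\varphi''>0$ (``concave'') follows verbatim with inequalities reversed, the sign of $\varphi''$ doing all the bookkeeping.

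For the substantial equivalence $(c)\Leftrightarrow(a)$, I would compute the Hessian of the symmetric function in (c$'$). With $\psi:=\varphi^{-1}$, $s:=\tfrac1n\sum_i\psi(x_i)$ and $F(\vec x):=\varphi(s)$, one finds
\[
\partial_i\partial_j F=\frac{\varphi''(s)}{n^2}\psi'(x_i)\psi'(x_j)+\frac{\varphi'(s)}{n}\psi''(x_i)\delta_{ij},
\]
and inserting $\psi'=1/\varphi'(t_i)$, $\psi''=-\varphi''(t_i)/\varphi'(t_i)^3$ at $t_i:=\psi(x_i)$ and testing against the vector $v_i=\varphi'(t_i)w_i$ reduces the quadratic form to
\[
\frac{\varphi''(s)}{n^2}\Bigl(\sum_i w_i\Bigr)^2-\frac{\varphi'(s)}{n}\sum_i\frac{\varphi''(t_i)}{\varphi'(t_i)}w_i^2 .
\]
The decisive observation is that sign-definiteness of this form for every $\vec w$ is, through the sharp Cauchy--Schwarz bound $\bigl(\sum_i w_i\bigr)^2\le\bigl(\sum_i\beta_iw_i^2\bigr)\bigl(\sum_i\beta_i^{-1}\bigr)$ with $\beta_i:=-\varphi''(t_i)/\varphi'(t_i)>0$, exactly equivalent to the single scalar inequality $n\,G(s)\le\sum_iG(t_i)$, where $G:=\varphi'/\varphi''$ and $s=\tfrac1n\sum_it_i$. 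Imposed at every point, this is the equal-weight $n$-point Jensen inequality for $G$, hence (for the continuous $G$, with $n\ge2$) equivalent to convexity of $G$; this gives $(a)\Leftrightarrow(c)$.

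To prove $(a)\Rightarrow(d)$ I would combine the convexity of $F$ just obtained with the fact that $F$ is strictly increasing in each variable, since $\partial_iF=\varphi'(s)\psi'(x_i)/n>0$. Given $f:\Omega\to\Lambda$ with $h:=\varphi\circ f$ convex, the identity $f=\varphi^{-1}\circ h$ yields, through functional calculus, $\varphi\circ\tau\circ f(A)=\Theta(\lambda(A))$ with the symmetric function $\Theta(\vec x)=F\bigl(h(x_1),\dots,h(x_n)\bigr)$ on $\Omega^n$. By the standard composition rule, a convex function nondecreasing in each argument composed with convex maps $x_i\mapsto h(x_i)$ is convex, so $\Theta$ is convex. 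Finally the theorem on convex spectral functions (Davis) turns symmetric convexity of $\Theta$ on the interval power $\Omega^n$ into convexity of $A\mapsto\Theta(\lambda(A))$ on $\bM_n\{\Omega\}$, which is $(d)$.

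The main obstacle is the Hessian step of $(c)\Leftrightarrow(a)$: one must recognize that the indefinite form above is sign-definite for all test vectors precisely at the equality threshold of Cauchy--Schwarz, and that the resulting inequality, after division by $\varphi''(s)$ (which reverses it in the convex case), collapses to Jensen's inequality for $\varphi'/\varphi''$. The two remaining ingredients — the composition rule for a monotone convex outer function and the reduction of matrix convexity to symmetric convexity of the eigenvalue map — are standard once this scalar criterion has been isolated, and they are what let the single scalar function $\varphi'/\varphi''$ govern the matrix functional for every admissible $f$.
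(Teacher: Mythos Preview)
Your proof is correct and essentially parallels the paper's. The Hessian computation for $(a)\Leftrightarrow(c)$ is the same as the paper's (the paper writes the second derivative along a line, you write the full Hessian and test it against a vector; after your substitution $v_i=\varphi'(t_i)w_i$ the two expressions coincide), and the reduction to Jensen's inequality for $G=\varphi'/\varphi''$ via the equality case of Cauchy--Schwarz is exactly the paper's argument.

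The one genuine difference is in the implication to $(d)$. The paper proves $(c')\Rightarrow(d)$ directly: it observes that $f=\varphi^{-1}\circ(\varphi\circ f)$ is convex (since $\varphi^{-1}$ is increasing convex), applies the Ky Fan majorization $(A+B)/2\prec(A^\downarrow+B^\downarrow)/2$ together with convexity of $f$ to get a weak majorization, and then invokes $(c')$ on the resulting diagonal matrices. You instead package $(a)\Rightarrow(d)$ as: $(a)$ gives convexity and coordinatewise monotonicity of $F$, hence $\Theta=F\circ(h,\dots,h)$ is symmetric convex on $\Omega^n$, and then the Davis/Lewis spectral-function theorem yields convexity of $A\mapsto\Theta(\lambda(A))$. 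This is a legitimate shortcut---the Davis theorem holds on $\bM_n\{\Omega\}$ for an interval $\Omega$ because its proof is precisely the Ky Fan majorization plus Schur-convexity of symmetric convex functions, i.e., the same mechanism the paper uses by hand. Your route is cleaner to state; the paper's is more self-contained.
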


\begin{proof}
We prove the convexity case in (a)--(d). First, assume only that $\varphi(t)$ is a
strictly increasing continuous function on $\Lambda$, and we show that each of (b), (c),
and (d) are equivalent to condition (c$'$) in the proof of Proposition \ref{P-5.1}.
It is obvious that (d) $\Rightarrow$ (b) $\Rightarrow$ (c) $\Leftrightarrow$ (c$'$).
To prove (c$'$) $\Rightarrow$ (d), let $f$ be as stated
in (d). Then $f=\varphi^{-1}\circ\varphi\circ f$ is automatically convex on $\Omega$
since $\varphi^{-1}$ is increasing and convex on $\Xi$ as shown in the proof of Proposition
\ref{P-5.1}. Let $A,B\in\bM_n\{\Omega\}$. By the Ky Fan majorization
$(A+B)/2\prec(A^\downarrow+B^\downarrow)/2$ in the same notation as in the proof of
Theorem \ref{T-4.1}, we have
\begin{align*}
f\biggl(\frac{A+B}{2}\biggr)
&\prec_wf\biggl(\frac{A^\downarrow+B^\downarrow}{2}\biggr)
=\varphi^{-1}\circ\varphi\circ f\biggl(\frac{A^\downarrow+B^\downarrow}{2}\biggr) \\
&\le\varphi^{-1}\biggl(\frac{\varphi\circ f(A^\downarrow)
+\varphi\circ f(B^\downarrow)}{2}\biggr)
\end{align*}
so that, by using (c$'$),
\begin{align*}
\varphi\circ\tau\circ f\biggl(\frac{A+B}{2}\biggr)
&\le\varphi\circ\tau\circ\varphi^{-1}
\biggl(\frac{\varphi\circ f(A^\downarrow)+\varphi\circ f(B^\downarrow)}{2}\biggr) \\
&\le\frac{\varphi\circ\tau\circ f(A^\downarrow)
+\varphi\circ\tau\circ f(B^\downarrow)}{2} \\
&=\frac{\varphi\circ\tau\circ f(A)+\varphi\circ\tau\circ f(B)}{2}.
\end{align*}
Hence it has been proved that (b)--(d) and (c$'$) are equivalent.

Next, assume that $\varphi(t)$ is smooth as stated in the theorem with $\varphi''(t)<0$,
and we prove the equivalence between  (c$'$) and (a). Let $t_i\in\Lambda^\circ$ and
$s_i:=\varphi(t_i)\in\Xi^\circ$ for $1\le i\le n$. For every $x_i\in\bR$, $1\le i\le n$,
one can directly compute the second derivative
\begin{align*}
&\frac{d^2}{du^2}\,\varphi\Biggl(\frac{1}{n}\sum_{i=1}^n\phi^{-1}(s_i+ux_i)
\Biggr)\Bigg|_{u=0} \\
&\qquad=\varphi''\Biggl(\frac{1}{n}\sum_{i=1}^nt_i\Biggr)
\Biggl(\frac{1}{n}\sum_{i=1}^n\frac{x_i}{\varphi'(t_i)}\Biggr)^2
-\varphi'\Biggl(\frac{1}{n}\sum_{i=1}^nt_i\Biggr)
\Biggl(\frac{1}{n}\sum_{i=1}^n\frac{\varphi''(t_i)x_i^2}{\varphi'(t_i)^3}\Biggr).
\end{align*}
Hence condition (c$'$) is satisfied if and only if
\begin{equation}\label{F-5.1}
\Biggl(\frac{1}{n}\sum_{i=1}^n\frac{x_i}{\varphi'(t_i)}\Biggr)^2
\le\frac{\varphi'\bigl(\frac{1}{n}\sum_{i=1}^nt_i\bigr)}
{\varphi''\bigl(\frac{1}{n}\sum_{i=1}^nt_i\bigr)}
\Biggl(\frac{1}{n}\sum_{i=1}^n\frac{\varphi''(t_i)x_i^2}{\varphi'(t_i)^3}\Biggr)
\end{equation}
holds for all $t_i\in\Lambda^\circ$ and $x_i\in\bR$. If this holds, then letting
$x_i:=\varphi'(t_i)^2/\varphi''(t_i)$ gives, thanks to $\varphi'(t)/\varphi''(t)<0$,
$$
\frac{1}{n}\sum_{i=1}^n\frac{\varphi'(t_i)}{\varphi''(t_i)}
\ge\frac{\varphi'\bigl(\frac{1}{n}\sum_{i=1}^nt_i\bigr)}
{\varphi''\bigl(\frac{1}{n}\sum_{i=1}^nt_i\bigr)},
$$
which means that $\varphi'(t)/\varphi''(t)$ is convex on $\Lambda^\circ$. Conversely, if
$\varphi'(t)/\varphi''(t)$ is convex on $\Lambda^\circ$, then we have \eqref{F-5.1} as
follows:
\begin{align*}
\Biggl(\frac{1}{n}\sum_{i=1}^n\frac{x_i}{\varphi'(t_i)}\Biggr)^2
&=\Biggl(\frac{1}{n}\sum_{i=1}^n\frac{\varphi'(t_i)^{1/2}}{\{-\varphi''(t_i)\}^{1/2}}
\cdot\frac{\{-\varphi''(t_i)\}^{1/2}x_i}{\varphi'(t_i)^{3/2}}\Biggr)^2 \\
&\le\frac{\varphi'\bigl(\frac{1}{n}\sum_{i=1}^nt_i\bigr)}
{\varphi''\bigl(\frac{1}{n}\sum_{i=1}^nt_i\bigr)}
\Biggl(\frac{1}{n}\sum_{i=1}^n\frac{\varphi''(t_i)x_i^2}{\varphi'(t_i)^3}\Biggr)
\end{align*}
by using the Schwarz inequality. Thus (c$'$) follows.

The concavity case is similarly proved, or else one can reduce the assertion to the
convexity case by considering $-\varphi(-t)$ on $-\Lambda$ and $-f(t)$ on $\Omega$.
\end{proof}

For given functions $\varphi$ on $\Lambda$ and $f:\Omega\to\Lambda$, if we define
$\tilde\varphi(t):=\varphi(-t)$ on $-\Lambda$ and $\tilde f(t):=-f(t)$ on $-\Lambda$, then
$\tilde\varphi\circ\tilde f=\varphi\circ f$ and
$\tilde\varphi\circ\tau\circ\tilde f=\varphi\circ\tau\circ f$ on $\Omega$. By taking
account of these, we see that Theorem \ref{T-5.2} holds true also when the assumption
$\varphi'(t)>0$ is replaced with $\varphi'(t)<0$.

The following examples are applications of Theorem \ref{T-5.2}:

\begin{example}\label{E-5.3}
When $\varphi(t)=\log t$ on $(0,\infty)$, $\varphi'(t)>0$, $\varphi''(t)<0$ and
$\varphi'(t)/\varphi''(t)=-t$. Hence, if $f:\Omega\to\bR$ is convex, then
$\log\tau\bigl(e^{f(A)}\bigr)$ (also $\log\mathrm{Tr}\,e^{f(A)}$) is convex on
$\bM_n\{\Omega\}$. In particular, $\log\mathrm{Tr}\,e^A$ is called the pressure of $A=A^*$
and its convexity in $A$ is well-known.
\end{example}

\begin{example}\label{E-5.4}
In case of $\varphi(t)=e^t$ on $\bR$, $\varphi'(t)/\varphi''(t)=1$. Hence, if
$f:\Omega\to(0,\infty)$ is concave, then $\det^{1/n}f(A)=\exp\tau(\log f(A))$ is concave
on $\bM_n\{\Omega\}$. By continuity this is true if $f$ is non-negative and concave on
$\Omega$; hence \eqref{F-2.7} is recaptured.
\end{example}

\begin{example}\label{E-5.5}
When $\varphi(t)=t^r$ on $[0,\infty)$ with $r\in(0,\infty)\setminus\{1\}$,
$\varphi'(t)/\varphi''(t)=(r-1)^{-1}t$. Hence,
$\|f(A)\|_{1/r}=\{\mathrm{Tr}\,f(A)^{1/r}\}^r$ is convex (resp., concave) on
$\bM_n\{\Omega\}$ if $r\in(0,1)$ (resp., $r\in(1,\infty)$) and $f:\Omega\to[0,\infty)$ is
convex (resp., concave). We thus have the convexity of Schatten norms and the concavity of
Schatten anti-norms involving a convex/concave function $f(t)$. A stronger statement has
been obtained in Corollary \ref{C-3.12}.
\end{example}

Finally, we state an abstract version of Theorem \ref{T-4.1}. In fact,  in case of
$\varphi(t)=t^r$, combined with Proposition \ref{P-3.7} and its norm version, it becomes
the  superadditivity part (when $r\in(1,\infty)$) of Theorem \ref{T-4.1} and its
subadditivity version (when $r\in(0,1)$). The proof is essentially the same as that of
Theorem \ref{T-4.1}.

\begin{prop}\label{P-5.6}
Let $\varphi$ be a strictly increasing continuous function from $[0,\infty)$ onto itself.
Let $\|\cdot\|$ (resp., $\|\cdot\|_!$) be a symmetric norm (resp., anti-norm) and $\Phi$
(resp., $\Phi_!$) be the corresponding gauge (resp., anti-gauge, see Proposition
\ref{P-3.2}) function on $\bR_n^+$. Assume that the $n$-variable function
$\varphi\circ\Phi\circ\varphi^{-1}(\vec x)$ (resp.,
$\varphi\circ\Phi_!\circ\varphi^{-1}(\vec x)$) is convex (resp., concave) on $\bR_+^n$.
Then for any convex (resp., concave) function $f:[0,\infty)\to[0,\infty)$ such that
$\varphi\circ f$ is subadditive (resp., superadditive), the functional
$A\mapsto\varphi(\|f(A)\|)$ (resp., $\varphi(\|f(A)\|_!)$) is subadditive (resp.,
superadditive) on $\bM_n^+$.
\end{prop}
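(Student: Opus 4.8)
The plan is to run the five-step argument of Theorem \ref{T-4.1} essentially verbatim, replacing the power $t\mapsto t^r$ throughout by $\varphi$, and replacing the norm-identity used in the closing line of that proof by the convexity (resp.\ concavity) hypothesis on $\varphi\circ\Phi\circ\varphi^{-1}$. I treat the subadditive (norm) case in detail; the superadditive (anti-norm) case is obtained by reversing every inequality and invoking Lemma \ref{L-4.2} in place of the Ky Fan principle. Write $a_i=\lambda_i(A)$ and $b_i=\lambda_i(B)$ for the decreasingly ordered eigenvalues, so that $A^\downarrow=\diag(a_i)$ and $B^\downarrow=\diag(b_i)$, and set $\Psi:=\varphi\circ\Phi\circ\varphi^{-1}$.

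First I would record the Ky Fan majorization $A+B\prec A^\downarrow+B^\downarrow$ and, exactly as in the proof of Theorem \ref{T-4.1}(1), use the convexity of $f$ to pass to the weak majorization $f(A+B)\prec_w f(A^\downarrow+B^\downarrow)$, whose right-hand side is the diagonal matrix with entries $f(a_i+b_i)$. Next I would exploit the scalar subadditivity of $\varphi\circ f$ coordinatewise: since $\varphi(f(a_i+b_i))\le\varphi(f(a_i))+\varphi(f(b_i))$ and $\varphi^{-1}$ is increasing, one gets, entrywise and hence in the diagonal positive-definite order, $f(A^\downarrow+B^\downarrow)\le\varphi^{-1}\bigl(\varphi(f(A^\downarrow))+\varphi(f(B^\downarrow))\bigr)$. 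Combining this with the weak majorization and applying the Ky Fan principle for symmetric norms yields $\|f(A+B)\|\le\bigl\|\varphi^{-1}\bigl(\varphi(f(A^\downarrow))+\varphi(f(B^\downarrow))\bigr)\bigr\|$.

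It then remains to apply the increasing function $\varphi$ to both sides and collapse the right-hand side. Putting $X:=\varphi(f(A^\downarrow))$ and $Y:=\varphi(f(B^\downarrow))$, both positive diagonal, and writing $\vec\xi,\vec\eta$ for their eigenvalue vectors, the inequality becomes $\varphi(\|f(A+B)\|)\le\Psi(\vec\xi+\vec\eta)$, whereas $\varphi(\|f(A)\|)=\Psi(\vec\xi)$ and $\varphi(\|f(B)\|)=\Psi(\vec\eta)$ straight from the definitions of $\Psi$ and of the gauge $\Phi$. Thus the entire proposition reduces to the single inequality $\Psi(\vec\xi+\vec\eta)\le\Psi(\vec\xi)+\Psi(\vec\eta)$, that is, to the \emph{subadditivity of} $\Psi$ on $\bR_+^n$; in the anti-norm case the parallel reduction is to the \emph{superadditivity of} $\Psi_!:=\varphi\circ\Phi_!\circ\varphi^{-1}$.

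This last reduction is the step I expect to be the real obstacle, and it is precisely where the convexity (resp.\ concavity) hypothesis must be used. It is the abstract counterpart of the closing line of Theorem \ref{T-4.1}, where one invoked the fact that $X\mapsto\|X^{1/q}\|^q$ is a genuine norm: for the power $\varphi(t)=t^r$ the functional $\Psi$ is positively homogeneous of degree one, so that its convexity is literally equivalent to subadditivity. For a general $\varphi$ the functional $\Psi$ is no longer homogeneous, and convexity alone only delivers the midpoint Jensen inequality; the delicate point is therefore to combine the convexity of $\Psi$ with the positive homogeneity of degree one of the underlying gauge $\Phi$ and with the normalization $\varphi(0)=0$ so as to recover sublinearity. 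I would attack it by tracking the scaling of $\Psi$ along rays and on the axes (where $\Phi(e_1)=1$ forces $\Psi$ to be the identity), showing that the hypothesis pins down the homogeneity that makes $\Psi$ subadditive. Once subadditivity of $\Psi$ (resp.\ superadditivity of $\Psi_!$) is established, the chain above closes and the proof is complete.
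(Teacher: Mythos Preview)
Your outline is precisely the paper's intended argument: the paper offers no detailed proof and simply says ``the proof is essentially the same as that of Theorem \ref{T-4.1}'', and your five steps reproduce that proof with $t\mapsto t^r$ replaced by $\varphi$. Your reduction of the whole statement to the single inequality $\Psi(\vec\xi+\vec\eta)\le\Psi(\vec\xi)+\Psi(\vec\eta)$ (resp.\ $\Psi_!(\vec\xi+\vec\eta)\ge\Psi_!(\vec\xi)+\Psi_!(\vec\eta)$) is correct, and this is indeed exactly where the convexity (resp.\ concavity) hypothesis must be used.

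The concern you raise about that final step is legitimate and well spotted. In Theorem \ref{T-4.1} the closing line works because $\Psi(\vec x)=\Phi(\vec x^{\,1/q})^q$ is positively homogeneous of degree one, so convexity and subadditivity are the same thing; for a general bijection $\varphi$ the functional $\Psi=\varphi\circ\Phi\circ\varphi^{-1}$ is \emph{not} homogeneous, and in dimension $n\ge2$ convexity together with $\Psi(0)=0$ by itself implies neither sub- nor superadditivity. Your proposed attack (``track scaling along rays and on the axes, where $\Phi(e_1)=1$ forces $\Psi$ to be the identity'') is not yet an argument: $\Phi(e_1)=1$ is a normalization rather than a fact, and linearity of $\Psi$ along the axes plus convexity does not pin down subadditivity without further input. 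The paper's one-line proof does not address this point either, so you have carried the argument as far as the paper does and have, in addition, correctly isolated the step that still requires justification.
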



\vskip 20pt\noindent


\begin{thebibliography}{99}

\bibitem{AB}  J. S. Aujla and J.-C. Bourin,
Eigenvalues inequalities for convex and log-convex functions,
\textit{Linear Algebra Appl.} \textbf{424} (2007), 25--35.

\bibitem{Bh} R. Bhatia, \textit{Matrix Analysis}, Springer, New York, 1996.

\bibitem{B2} J.-C. Bourin, Hermitian operators and convex functions,
\textit{J. Inequal. Pure Appl. Math.} \textbf{6} (2005), Article 139, 6 pp.

\bibitem{B1} J.-C. Bourin, Comparison of operators via unitary
orbits, in preparation.

\bibitem{BL} J.-C.\ Bourin and E.-Y.\ Lee, Concave functions of
positive operators, sums, and congruences, \textit{J.\ Operator Theory} \textbf{63}
(2010), 151--157.

\bibitem{BK} L.\ G.\ Brown and H.\ Kosaki, Jensen's inequality in semi-finite von Neumann
algebras, \textit{J.\ Operator theory} \textbf{23} (1990) 3--19.

\bibitem{H1} F.\ Hiai, Matrix Analysis: Matrix Monotone Functions, Matrix Means, and Majorization (GSIS selected lectures),
\textit{Interdisciplinary Information Sciences} \textbf{16} (2010), 139--248.

\bibitem{KA}
F.\ Kubo and T.\ Ando, Means of positive linear operators,
\textit{Math.\ Ann.\ } \textbf{246} (1980), 205--224.

\bibitem{Lee} E.-Y.\ Lee, Extension of Rotfel'd theorem, to appear in
\textit{Linear Algebra Appl.}

\bibitem{S} B. Simon, \textit{Trace Ideals and Their Applications}, Second ed.,
Amer. Math. Soc., Providence, RI, 2005.

\bibitem{Zh} X.\ Zhan, The sharp Rado theorem for majorizations, \textit{Amer.\ Monthly}
\textbf{110} (2003), 152--153.

\end{thebibliography}
\end{document}